\newcommand{\multiline}[1]{%
  \begin{tabularx}{\dimexpr\linewidth-\ALG@thistlm}[t]{@{}X@{}}
    #1
  \end{tabularx}
}
\newtheorem{theorem}{Theorem}
\newtheorem{remark}{Remark}
\newtheorem{lemma}{Lemma}
\newtheorem{assumption}{Assumption}
\newtheorem*{lemma-non}{Lemma}
\journal{Systems \& Control Letters}
\begin{document}

\begin{frontmatter}

\title{ROTEC: Robust to Early Termination Command Governor for Systems with Limited Computing Capacity\tnoteref{grants}}

\tnotetext[grants]{This research has been supported by National Science Foundation under award numbers ECCS-1931738, ECCS-1932530, and CMMI-1904394.}

\author[WUSTL_ESE]{Mehdi Hosseinzadeh\corref{cor1}}
\ead{mehdi.hosseinzadeh@ieee.org}

\author[WUSTL_ESE]{Bruno Sinopoli}
\ead{bsinopoli@wustl.edu}

\author[UoM]{Ilya Kolmanovsky}
\ead{ilya@umich.edu}

\author[WUSTL_CSE]{Sanjoy Baruah}
\ead{baruah@wustl.edu}

\address[WUSTL_ESE]{Department of Electrical and Systems Engineering, Washington University in St. Louis, St. Louis, MO 63130, USA}

\address[UoM]{Department of Aerospace Engineering, University of Michigan, Ann Arbor, MI 48109, USA}

\address[WUSTL_CSE]{Department of Computer Science and Engineering, Washington University in St. Louis, St. Louis, MO 63130, USA}

\cortext[cor1]{Corresponding author}

\begin{abstract}
A Command Governor (CG) is an optimization-based \textit{add-on} scheme to a nominal closed-loop system. It is used to enforce state and control constraints by modifying reference commands. This paper considers the implementation of a CG on embedded processors that have limited computing resources and must execute multiple control and diagnostics functions; consequently, the time available for CG computations is limited and may vary over time. To address this issue, a robust to early termination command governor is developed which embeds the solution of a CG problem into the internal states of a virtual continuous-time dynamical system which runs in parallel to the process. This virtual system is built so that its trajectory converges to the optimal solution (with a tunable convergence rate), and provides a sub-optimal but feasible solution whenever its evolution is terminated. This allows the designer to implement a CG strategy with a small sampling period (and consequently with a minimum degradation in its performance), while maintaining its constraint-handling capabilities. Simulations are carried out to assess the effectiveness of the developed scheme in satisfying performance requirements and real-time schedulability conditions for a practical vehicle rollover example.
\end{abstract}

\begin{keyword}
Cyber\textendash physical systems \sep Safety-critical control schemes \sep Command governor \sep Real-time schedulability \sep Vehicle rollover prevention
\end{keyword}

\end{frontmatter}
\section{Introduction}
A common aspect of today's Cyber\textendash Physical Systems (CPSs) is that multiple safety-critical controllers/control systems responsible for different system functions may execute in a shared processing unit\textemdash see \figurename~\ref{fig:Structure}. Examples of such systems can be found in safety-critical applications like aircraft, autonomous vehicles, medical devices, and autonomous robots.

\begin{figure}[!t]
\centering
\includegraphics[width=7cm]{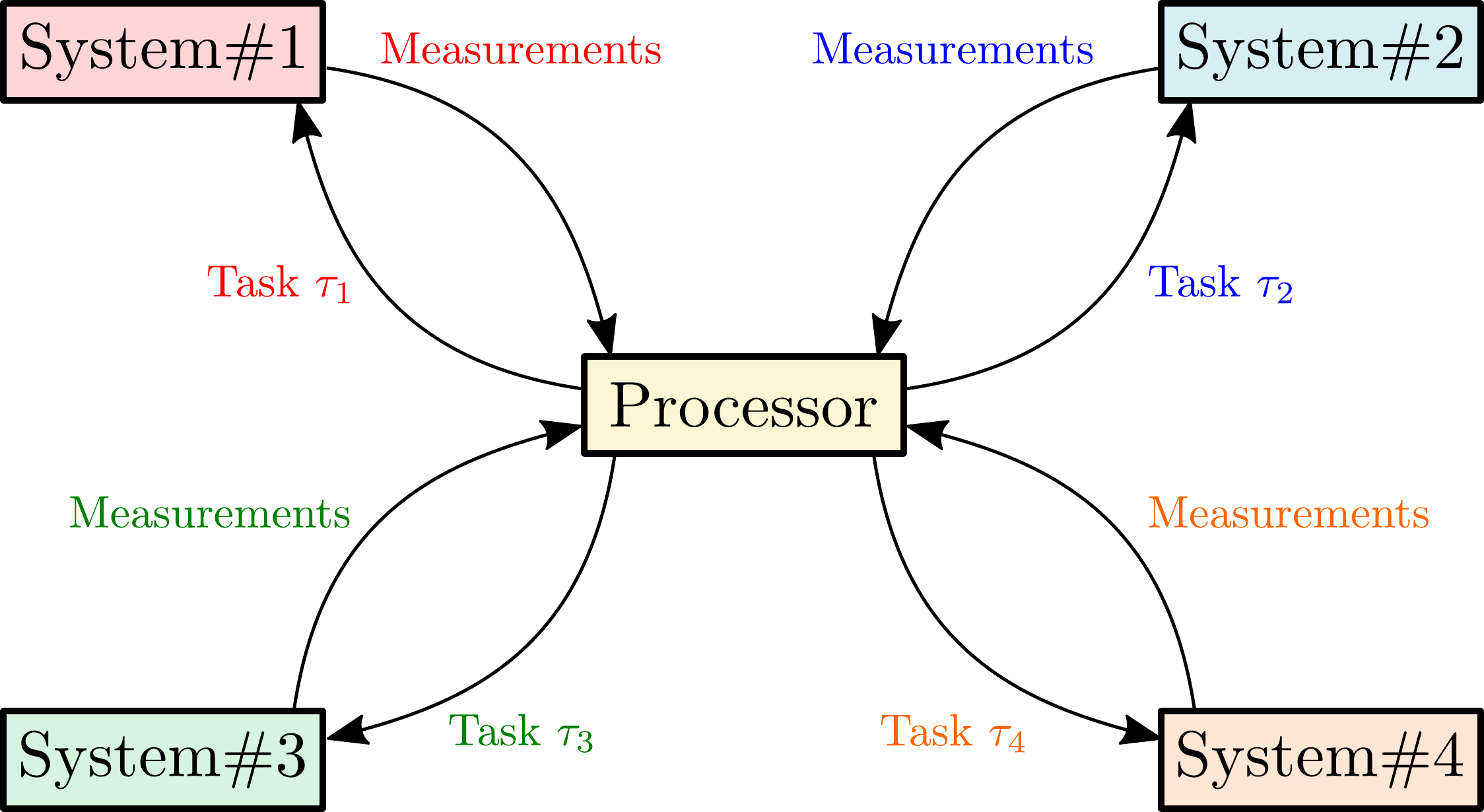}\\(a)\vspace{0.2cm}\\
\includegraphics[width=8cm]{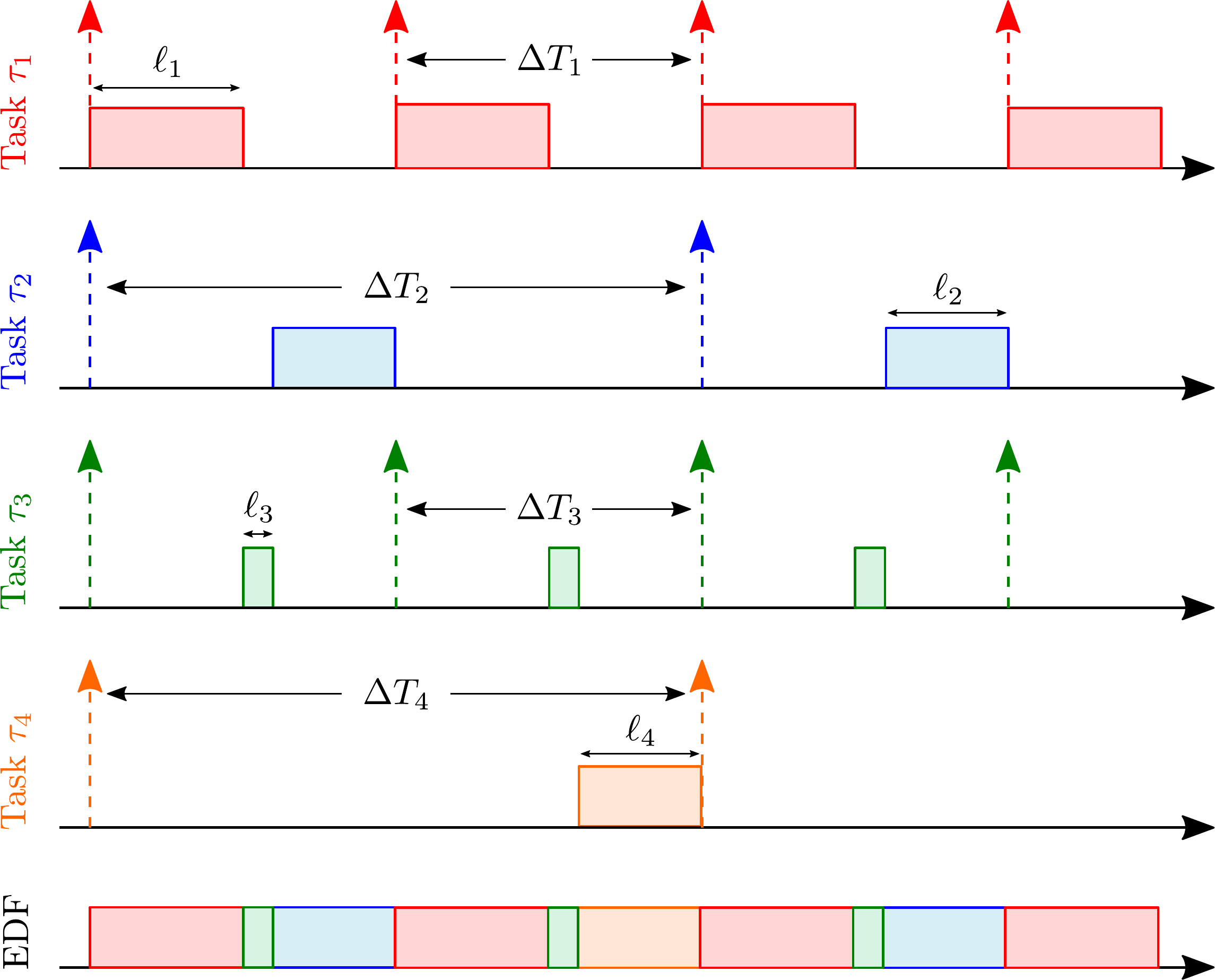}\\(b)
\caption{Figure (a): A CPS that comprises four safety-critical control systems implemented on a single processor. Task $\tau_i$ implements the controller for System\#$i,~i\in\{1,\cdots,4\}$. Figure (b): An execution schedule based on the EDF policy for the considered CPS, where $\ell_i$ and $\Delta T_i$ are the worst-case execution time and sampling period of task $\tau_i$, respectively. The upward arrows indicate when the measurements are received at the processor and tasks are invoked.}
\label{fig:Structure}
\end{figure}

Three related issues that should be addressed carefully when designing such CPSs with multiple control systems are: 1) system performance; 2) system safety; and 3) real-time schedulability. System performance refers to the degree to which the control objectives of the individual control systems are achieved, which can be indicated by various measures of cost, efficiency, accuracy, etc. System safety refers to the satisfaction of the operational constraints and requirements. Real-time schedulability assures that timing requirements of different control systems informing the CPS are satisfied.


\subsection{Prior Work on Real-Time Scheduling} \label{sec:LRScheduling}
The implementation of $N$ controllers can be seen as the problem of executing $N$ recurrent  (periodic) tasks \cite{LL73} on a resource-constrained processor. The real-time scheduling literature (e.g., \cite{Buttazzo2011}) provides a wide range of schedulability analysis techniques based upon the Earliest Deadline First (EDF) policy \cite{LL73,Der74} to address this problem. These techniques primarily emphasize the scheduling aspects and stabilization of control systems under cyber constraints (e.g., limited computational time and end-to-end delay); to the best of our knowledge, prior real-time scheduling techniques do not explicitly address/ensure physical constraints\footnote{Throughout this paper, by ``constraints" we mean constraints on system input and/or state variables.} satisfaction (e.g., hard limits on system input, state, and output variables). 

Existing real-time schedulability analysis techniques may be roughly classified into two groups. The first group (e.g., \cite{Short2011,Bini2008,Roy2021}) determines the worst-case execution time of each task, and then determines the sampling periods for each task such that the real-time schedulability conditions are satisfied.

A main shortcoming of this approach arises from the variability and unpredictability of task execution times, particularly on modern processors \cite{Wilhelm2008}. As a result, the worst-case execution times of the tasks are computed conservatively. This tends to severely under-utilize the computational resources, and requires assignment of large sampling periods to the tasks, which can lead to control performance degradation. See \cite{Lozoya2013} for more details about the trade-off between resources and control performance in embedded control systems. Though methods to determine sampling periods which ensures stability and reduce the impact on system performance have been proposed in the prior literature (e.g., \cite{Palopoli2005,Velasco2010,Chang2018,Chwa2018,Chang2017}), this paper addresses the impact of limited computing capacity on the ability to satisfy physical constraints (i.e., constraints on input and/or state variables) with the Command Governor scheme; this problem is different and has not been addressed in the prior literature.

The second group of more recent schedulability analysis techniques involves task scheduling based upon a \textit{relaxed} upper bound on their execution times. In this case, first, an optimistic upper bound on the execution time of each task is estimated, within which most invocations of the task are deemed likely to complete. Sampling periods for each task are then computed based upon these optimistic  upper bounds so to satisfy real-time schedulability conditions. At run-time, if the execution time of a task exceeds the determined upper-bound at any sampling instant, the processor allows the task to complete, which delays the execution of other tasks. This can be seen as an unwanted increase in the sampling periods of all tasks. To compensate for the impact of this increase, the control parameters for all control systems are modified at the next sampling instant. This approach has been investigated in \cite{Pazzaglia2021,Vreman2021}, and shown to lead to better performance and reduce under-utilization of  computational resources. To avoid adding extra computational burden, the modifications can be computed offline for the set of all possible cases and stored for online use. A method for reducing the number of controllers to be designed offline, while still guaranteeing specified control performance, is presented in \cite{Buttazzo2004}. However, in general, there are no systematic methods to compute the modifications, and even no guarantees for the existence of such modifications. Furthermore, proposed strategies do not enforce physical constraints.

The problem of control-scheduling co-design under different types of cyber constraints is addressed in \cite{Tabuada2007,Ryu1997,Aminifar2016}. In particular, stabilization of a control system in presence of limited execution time has been studied in  \cite{Tabuada2007}, where the authors present an event-triggered scheduler that decides which task should be executed at any given instant. The authors of \cite{Ryu1997} present a heuristic optimization method to optimize the end-to-end timing constraint (i.e., loop processing time and input-to-output latency) for a given control performance specification. A  server-based resource reservation mechanism is proposed in \cite{Aminifar2016} to ensure stability in the presence of server bandwidth constraints. However, physical constraints (i.e., constraints on input and/or state variables) have not been considered in the above-mentioned references.

Another policy used to address the real-time schedulability in CPSs is the Fixed Priority Preemptive Scheduling (FPPS) policy \cite{Kumar2020,Fara2021}. In this policy, the processor executes the highest priority task of all those tasks that are currently ready to execute. Though different methods have been proposed to assign priorities to ensure that all tasks will execute (e.g., \cite{Zhao2017,Lin2021}), if a critical task is the one with lower priority and other tasks are always schedulable without that task, the lower-priority task could wait for an unpredictable amount of time. This obviously degrades the control performance, and may even lead to constraints violation.

\subsection{Prior Work on Optimization-Based Constrained Control of Systems With Limited Computing Capacity} \label{sec:LRControl}
As mentioned above, safety in this paper is related to the constraint enforcement. The literature on constrained control has been dominated by optimization-based techniques, such as Model Predictive Control (MPC) \cite{Camacho2013,Rawlings2017}, Reference/Command Governors (RG/CG) \cite{Garone2016}, and Control Barrier Functions \cite{Ames2019}. However, the use of online optimization is computationally intensive and creates practical challenges when they are employed in a CPS with limited computational power.

Another approach to implement optimization-based control laws for system with constraints is to pre-compute them offline and store them in memory for future use. This idea is adopted in explicit MPC \cite{Alessio2009}. However, even in the case of linear quadratic MPC, explicit MPC can be more memory and computation time consuming for larger state dimensional problems or problems with a large number of constraints as compared to the onboard optimization based methods. Furthermore, it is not robust with respect to early/premature termination of the search for the polyhedral region containing the current state.

Another possible way to address limited computing power is to resort to triggering, as in self-triggered \cite{Henriksson2012} and event-triggered \cite{Yoo2021,Wang2021} optimization-based constrained control. However, there is no guarantee that  sufficient computational power will be available when the triggering mechanism invokes simultaneously multiple controllers.

Another way to address limited computational power in optimization-based constrained control schemes is to perform a fixed number of iterations to approximately track the solution of the associated optimization problem. This approach has been extensively investigated for MPC \cite{Ghaemi2009,Pherson2020}. For instance, \cite{Cimini2017} pursues the analysis of the active-set methods to determine a bound on the number of iterations. However, there is no guarantee that required iterations can be carried out in the available time in a shared processing unit. The dynamically embedded MPC is introduced in \cite{Nicotra2018}, where the processor, instead of solving the MPC iteration, runs a virtual dynamical system whose trajectory converges to the optimal solution of the MPC problem. Although employing warm-starting \cite{Wang2010} can improve convergence of the virtual dynamical system in dynamically embedded MPC, guaranteeing recursive feasibility (i.e., remaining feasible indefinitely) with this scheme is challenging, as a sudden change in the reference signal can drastically change the problem specifics, e.g., the terminal set may not be reachable anymore within the given prediction horizon. To address this issue, in \cite{Nicotra2019}, the dynamically embedded MPC is augmented with an Explicit Reference Governor (ERG) \cite{HosseinzadehMED,Cotorruelo2021,HosseinzadehECC}; however, this may lead to conservative (slow) response due to conservatism of ERG.

A CG supervisory scheme is presented in \cite{Famularo2015}, where different CG problems are designed for different operating points of the system, and a switching mechanism is proposed to switch between CG schemes. However, the effects of early termination of the computations on the CG schemes has not been considered in \cite{Famularo2015}. In \cite{Peng2019}, a one-layer recurrent neural network is proposed to solve a CG problem in finite time. However, \cite{Peng2019} does not address the variability and unpredictability of the available computing time for CG schemes when implemented in a shared processing unit and, in particular, situations when the available time is less that the required time for convergence.

\subsection{Proposed Solution} 
A CG is an optimization-based \textit{add-on} scheme to a nominal closed-loop system used to modify the reference command in order to satisfy state and input constraints. In this paper, we develop ROTEC (RObust to early TErmination Command governor), which is based on the use of primal-dual flow algorithm to define a virtual continuous-time dynamical system whose trajectory tracks the optimal solution of the CG. ROTEC runs until available time for execution runs out (that may not be known in advance) and the solution is guaranteed to enforce the constraints despite any early termination. ROTEC can guarantee safety while addressing system performance. Also, it guarantees recursive feasibility. This feature allows to satisfy the real-time schedulability condition, even when numerous safety-critical control systems are implemented on a processor with very limited computational power. 

In this paper, ROTEC is introduced as a continuous-time scheme. This facilitates the analysis and the derivation of its theoretical properties. Our numerical experiments show that these properties  are maintained when ROTEC is implemented in discrete time with a sufficiently small sampling period. This is not dissimilar to how control schemes are derived and analyzed. We leave the study of theoretical guarantees for the discrete-time implementation to future work.

\subsection{Contribution} 
To the best of our knowledge, this paper is the first that ensures robustness to early termination for CG schemes. This feature allows us to address real-time schedulability in CPSs, while ensuring constraints satisfaction at all times. In particular, it is shown analytically that ROTEC ensures the safe and efficient operation of a CPS. The main contributions are: 1) development of ROTEC; 2) demonstration that it enforces the constraints robustly with respect to the time available for computations; and 3) evaluation of its effectiveness for vehicle rollover prevention.

Our approach to CG implementation is inspired by \cite{HosseinzadehTAC,HosseinzadehLetterLuca} in exploiting barrier functions and the primal-dual continuous-time flow algorithm, but addresses a different problem. Our proofs of convergence are inspired by Lyapunov-based approaches in \cite{Nicotra2018,Nicotra2019,HosseinzadehLetter}, but once again explored for a different problem.

\subsection{Organization} 
The rest of the paper is organized as follows. Section \ref{sec:PF} formulates the problem, and discusses the control requirements and real-time schedulability conditions. Section \ref{sec:classicCG}  describes the conventional CG scheme. Section \ref{sec:ECG} develops ROTEC, proves its properties, and discusses its initialization. In Section \ref{sec:simulation}, a simulation study of vehicle rollover is reported to validate ROTEC. Finally, Section \ref{sec:conclusion} concludes the paper.

\subsection{Notation} We use $t$ to denote continuous time, $k$ to denote sampling instants, $s$ to denote predictions made at each sampling instant, and $\eta$ to denote the auxiliary time scale that ROTEC spends on solving the CG problem. $I$ and $\textbf{0}$ indicate, respectively, the identity and zero matrices with appropriate dimensions. In this paper, $\nabla_{x_1x_2}X(x_1,x_2)\triangleq\frac{\partial}{\partial x_2}(\frac{\partial}{\partial x_1}X(x_1,x_2))$.

\section{Problem Formulation}\label{sec:PF}
This section gives details about the considered CPS setting, highlights the practical challenges, and explains how we address the challenges.

\subsection{Setting} 
We consider a CPS comprising $N$ controllers implemented on a single processing unit. From a real-time computing perspective, this can be seen as a set of $N$ tasks running on a single processor. We denote the tasks by $\tau_i,~i={1,\cdots,N}$, where task $\tau_i$ performs a specific action for the $i$-th control system. The task $\tau_i$ is represented by the 2-tuple $\{\ell_i,\Delta T_i\}$, where $\ell_i$ is the worst-case execution time and $\Delta T_i$ is the sampling period. Suppose that tasks $\tau_1,\cdots,\tau_{N-1}$ correspond to pivotal actions with fixed and pre-determined sampling periods, and task $\tau_N$ implements a CG strategy. This setting is considered without loss of generality; the case in which more than one task implements a CG strategy can be addressed by applying the method to each one.

\subsection{Details of Task $\tau_N$}\label{sec:TaskN}
Suppose that task $\tau_N$ controls the following system:
\begin{align}
\dot{x}(t)=A_ox(t)+B_ou(t),
\end{align}
where $x(t)=[x_1(t)~\cdots~x_n(t)]^\top\in\mathbb{R}^n$ is the state of the system at time $t$, $u(t)=[u_1(t)~\cdots~u_p(t)]^\top\in\mathbb{R}^p$ is the control input at time $t$, and $A_o\in\mathbb{R}^{n\times n}$ and $B_o\in\mathbb{R}^{n\times p}$ are system matrices.

Although the control $u(t)$ is usually designed in the continuous-time domain, its computer implementation is described in the discrete-time domain. That is task $\tau_N$ is invoked at discrete sampling instants. Under the Logical Execution Time (LET) paradigm \cite{Henzinger2003,Frehse2014,Liang2019}, which is widely adopted in CPS, the control signal that is computed based on the measurements at sampling instant $k$ is applied to the plant at sampling instant $k+1$. This means that there is a fixed sampling-to-actuation delay which is equal to $\Delta T_N$. Thus, for a zero-order hold implementation, the sampled-data model of the plant for one-sample delay can be expressed as
\begin{align}\label{eq:systemdiscrete}
x(k+1)=A_dx(k)+B_du(k-1),
\end{align}
where $A_d=e^{A_o\Delta T_N}$ and $B_d=\int_{0}^{\Delta T_N}e^{A_ot}B_odt$ \cite{Ogata1995}.

Given the augmented state vector $z(k):=[x(k)^\top~u(k-1)^\top]^\top\in\mathbb{R}^{n+p}$ \cite{Roy2021}, system \eqref{eq:systemdiscrete} can be rewritten as:
\begin{align}\label{eq:systemfinal}
z(k+1)=Az(k)+Bu(k),
\end{align}
where 
\begin{align}
A:=\left[\begin{matrix}A_d & B_d \\ \textbf{0} & \textbf{0}\end{matrix}\right],~~B:=\left[\begin{matrix}\textbf{0} \\I_p\end{matrix}\right],
\end{align}
with $I_p$ as the $p\times p$ identity matrix.

We assume that the following nominal control law is available that stabilizes the system:
\begin{align}\label{eq:controllaw}
u(k)=Kz(k)+Gv(k),
\end{align}
where $K\in\mathbb{R}^{p\times(n+p)}$ is the feedback gain matrix, $G\in\mathbb{R}^{p\times m}$ is the feedforward gain matrix, and $v(k)\in\mathbb{R}^m$ is the command signal (a.k.a. reference commands). The feedback gain matrix $K$ should be determined such that $A_c:=A+BK$ is Schur. 

\begin{remark}
Since $rank\left(\left[\begin{matrix}\textbf{0} & B_d &  \cdots & A_d^{n-1}B_d\\ I_{p} & \textbf{0} &  \cdots & \textbf{0}\end{matrix}\right]\right)=p+rank\left(\left[\begin{matrix}B_d & \cdots & A_d^{n-1}B_d\end{matrix}\right]\right)$, it is concluded that there exists a stabilizing feedback gain matrix $K$ as in \eqref{eq:controllaw} if and only if the pair $(A_d,B_d)$ is controllable. If the pair $(A_o,B_o)$ is controllable, the sampling period can be chosen \cite{Kalman1962,Karbassi1996} such that the controllability is preserved, and consequently the existence of a stabilizing feedback gain matrix $K$ is guaranteed.
\end{remark}

\subsection{Control Requirements And Structure For Task $\tau_N$} 
Let $r(k)\in\mathbb{R}^m$ be the desired reference at sampling instant $k$. Also, let the output of System \#$N$ be defined as
\begin{align}\label{eq:output}
y(k)=Cz(k)+Dv(k),
\end{align}
where $y(k)=[y_1(k)~\cdots~y_m(k)]^\top\in\mathbb{R}^{m}$ is the output at sampling instant $k$, and $C\in\mathbb{R}^{m\times(n+p)}$ and $D\in\mathbb{R}^{m\times p}$ are output matrices. Let $\mathcal{Y}\subset\mathbb{R}^m$ be a pre-defined compact and convex set.

Suppose that task $\tau_N$ implements the CG scheme to determine $v(k)$ in \eqref{eq:controllaw} to meet the following control requirements: 
\begin{itemize}
\setlength\itemsep{0em}
\item For any desired reference $r(k)$, $y(k)\in\mathcal{Y},~\forall k$; and
\item For any constant desired reference $r$ which is inside the interior of $\mathcal{Y}$, the command signal $v(k)$ asymptotically converges to $r$, i.e., $v(k)\rightarrow r$ as $k\rightarrow\infty$.
\end{itemize}

\subsection{Real-Time Schedulability Condition} 
We consider the processor running the tasks based on EDF scheduling policy, and we assume that the deadline of each task is equal to its sampling period. Thus, the tasks $\tau_1,\cdots,\tau_N$ are schedulable if the following condition is satisfied \cite{Buttazzo2011}:
\begin{align}\label{eq:schedulabilitycondition}
U=\sum_{i=1}^N U_i\leq1,
\end{align}
where $U_i:=\ell_i/\Delta T_i$ ($U_i>0$) is called the utilization of task $\tau_i$, and $U>0$ is called the utilization of the processor.

Suppose that $\sum_{i=1}^{N-1} U_i<1$. Thus, to satisfy the real-time schedulability condition \eqref{eq:schedulabilitycondition}, the execution time and sampling period of task $\tau_N$ should satisfy $\frac{\ell_N}{\Delta T_N}\leq1-\sum_{i=1}^{N-1} U_i$. This implies that for a large $\ell_N$, the sampling period $\Delta T_N$ should be large as well. The CG problem makes use of online optimization and has a large execution time, i.e., $\ell_N$ for CG is large. Thus, $\Delta T_N$ should be set to a large value; this can degrade the performance.

\subsection{An Illustrative Example} 
Suppose that task $\tau_N$ controls the double integrator system $\dot{x}_1(t)=x_2(t)$, $\dot{x}_2(t)=u(t)$, discretized as $x_1(k+1)=x_1(k)+\Delta T_Nx_2(k)$, $x_2(k+1)=x_2(k)+\Delta T_Nu(k-1)$, controlled through $u(k)=K_1x_1(k)+K_2x_2(k)+K_3u(k-1)+Gv(k)$, with the reference signal $r=0.5$, and subject to constraints $\left\vert u(k)\right\vert\leq0.1$ and $\left\vert x_2(k)\right\vert\leq0.1$. We compute $K_1$, $K_2$, $K_3$, and $G$ such that the closed-loop poles are placed at 0.6, and for any constant command signal $v$ the equilibrium point of the system is $[v~0~0]^\top$.

Suppose that the worst-case execution time is 2 seconds, i.e., $\ell_N=2$. Assuming that $\sum_{i=1}^{N-1} U_i=0.2$, inequality \eqref{eq:schedulabilitycondition} implies that $\Delta T_N\geq2.5$ [s]. Fig.~\ref{fig:illustrativeexample} demonstrates that the tracking performance is degraded with larger $\Delta T_N$.

\subsection{Goal of This Paper}  
The main goal of this paper to develop a method, called ROTEC, to implement the CG scheme without requiring the exact optimization. The core idea is to use the primal-dual gradient flow to track the optimal solution of the CG problem, and provide a feasible solution if terminated at any moment.

\section{Conventional Command Governor}\label{sec:classicCG}
The core idea behind the CG scheme is to augment a prestabilized system with an \textit{add-on} control unit that, whenever necessary, manipulates the command signal to ensure constraint satisfaction. In the following, we briefly describe the basics of CG under the following assumptions.

\begin{assumption}\label{assum:observability}
$A+BK$ is Schur and $(A,C)$ is observable, where $A,B$ are as in \eqref{eq:systemfinal}, $K$ is as in \eqref{eq:controllaw}, and $C$ is as in \eqref{eq:output}.
\end{assumption}

\begin{assumption}\label{assum:constraintset}
The set $\mathcal{Y}$ is defined as $\mathcal{Y}:=\{y|y_i\leq\bar{y}_i,~\forall i\}$, where $\bar{y}_i\geq0$. This assumption is not restrictive, as any convex domain with nonempty interior can be inner-approximated with a polyhedron \cite{Bemporad2004}. 
\end{assumption}

Suppose that task $\tau_N$ implements CG to control system \eqref{eq:systemfinal} through the control law \eqref{eq:controllaw}. At any sampling instant $k$, CG computes the optimal command signal $v^\ast(k)$ by solving the following optimization problem \cite{Garone2016}:
\begin{align}\label{eq:CG1}
v^\ast(k)=\left\{
\begin{array}{ll}
     &  \arg\min\limits_{v}\frac{1}{2}\left\Vert v-r(k)\right\Vert_Q^2\\
    \text{s.t.} & (z(k),v)\in \tilde{O}_\infty 
\end{array}
\right.,
\end{align}
where $Q=Q^\top>0$, $\left\Vert v-r(k)\right\Vert_Q^2=(v-r(k))^\top Q(v-r(k))$, and $\tilde{O}_\infty$ is a subset of the maximal output admissible set:
\begin{align}
O_\infty=\{(z,v)|\hat{y}_i(s|z,v)\leq\bar{y}_i,~i=1,\cdots,m,~s=0,1,\cdots\},
\end{align}
where $\hat{y}(s|z,v)=[\hat{y}_1(s|z,v)~\cdots~\hat{y}_m(s|z,v)]^\top$ is the predicted output at the prediction instant $s$, which according to \eqref{eq:systemfinal} and \eqref{eq:controllaw}-\eqref{eq:output} can be computed as
\begin{align}\label{eq:prediction}
\hat{y}_i(s|z,v)=&C_iA_c^sz+H_{is}v,
\end{align}
where $H_{is}:=C_i(I-A_c)^{-1}(I-A_c^s)BG+D_i$ is a constant nonzero vector, with $C_i$ and $D_i$ as the $i$-th row of output matrices $C$ and $D$, respectively. Note that CG computes the optimal command signal such that the predicted response from the initial condition $z(k)$ with the command signal kept constant satisfies the constraints. As a result, the predictions \eqref{eq:prediction} are computed by fixing the command signal throughout all $s$ steps.

\begin{figure}[!t]
\centering
\includegraphics[width=5.85cm]{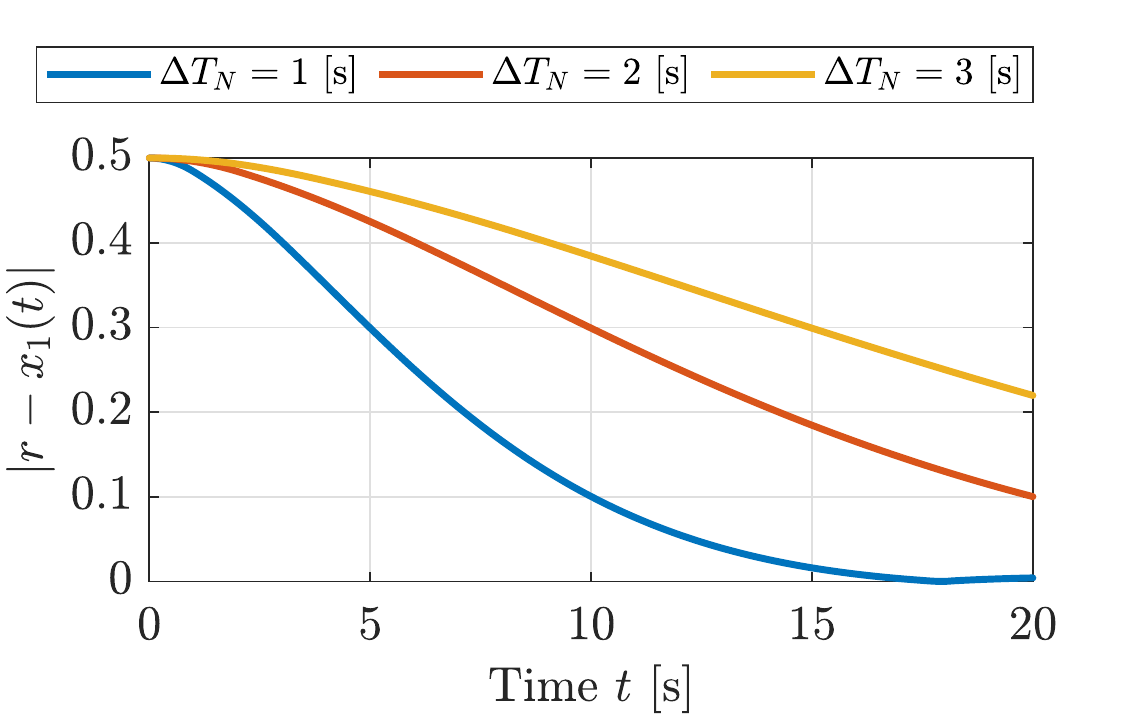}
\caption{The tracking performance of CG for different sampling period $\Delta T_N$.}
\label{fig:illustrativeexample}
\end{figure}

Since $A_c$ is Schur, Assumption \ref{assum:observability} and \ref{assum:constraintset} imply that \cite{Gilbert1991} the set $\tilde{O}_\infty=O_\infty\bigcap\Gamma$,
where $\Gamma=\{(z,v)|\hat{y}_i(\infty|z,v)\leq (1-\epsilon)\bar{y}_i,~i=1,\cdots,m\}$ for some $\epsilon>0$, is finitely determined and positively invariant. That is there exists a finite index $s^\ast$ such that
\begin{align}\label{eq:Oinfty}
\tilde{O}_\infty=\{(z,v)|\hat{y}_i(s|z,v)\leq\bar{y}_i,~i=1,\cdots,m,~s=0,1,\cdots,s^\ast\}\bigcap\Gamma. 
\end{align}

The value of index $s^\ast$ can be obtained by solving a sequence of mathematical programming  problems which is detailed in \cite[Algorithm 3.2]{Gilbert1991}. The computations are often straightforward, even when $p$, $m$ and $n$ are quite large. In particular, when $\mathcal{Y}$ is polyhedral (as in our case; see Assumption 2), the programming problems are linear. Note that these computations can be performed once and are offline.

Therefore, the CG problem given in \eqref{eq:CG1} can be rewritten as
\begin{align}\label{eq:CG2}
v^\ast(k)=\left\{
\begin{array}{ll}
     &  \arg\min\limits_{v}\frac{1}{2}\left\Vert v-r(k)\right\Vert_Q^2\\
    \text{s.t.} & \hat{y}_i(s|z(k),v)\leq\bar{y}_i,~i=1,\cdots,m,s=0,1,\cdots,s^\ast\\
    & \hat{y}_i(\infty|z(k),v)\leq (1-\epsilon)\bar{y}_i,~i=1,\cdots,m
\end{array}
\right.
\end{align}
which is a Quadratic Programming (QP) problem with $m\cdot(s^\ast+2)$ linear inequality constraints.

\begin{remark}\label{remark:Slater}
Since $\tilde{O}_\infty$ is positively invariant, if $(z(0),v(0))\in\tilde{O}_\infty$, at any sampling instant $k$, there exists $v^\ast(k)$ such that $\hat{y}_i(s|z(k),v^\ast(k))\leq\bar{y}_i,~ i\in\{1,\cdots,m\},~s\in\{1,\cdots,s^\ast\}$ and $\hat{y}_i(\infty|z(k),v^\ast(k))\leq(1-\epsilon)\bar{y}_i,~i\in\{1,\cdots,m\}$.
\end{remark}

\begin{remark}\label{remark:Geometric1}
The Karush–Kuhn–Tucker (KKT) condition \cite{Boyd2004} implies that, at any sampling instant $k$, $\hat{y}_{i^\dag}(s^\dag|z(k),v^\ast(k))=\bar{y}_{i^\dag}$ if the constraint is active, and $\hat{y}_{i^\dag}(s^\dag|z(k),v^\ast(k))<\bar{y}_{i^\dag}$ otherwise, where $i^\dag\in\{1,\cdots,m\}$ and $s^\dag\in\{0,\cdots,s^\ast\}$. Also, $\hat{y}_{i^\dag}(\infty|z(k),v^\ast(k))=(1-\epsilon)\bar{y}_{i^\dag}$ if the constraint is active, and $\hat{y}_{i^\dag}(\infty|z(k),v^\ast(k))<(1-\epsilon)\bar{y}_{i^\dag}$ otherwise.
\end{remark}

\section{Proposed Solution: ROTEC}\label{sec:ECG}
A common approach to solve the optimization problem \eqref{eq:CG2} is to use the primal-dual interior-point methods. Though these methods are fast and efficient, in general, the iterates in these methods are not necessarily feasible \cite[pp. 609]{Boyd2004}. Thus, in the presence of early termination, to ensure constraint satisfaction one could resort to keeping the command signal unchanged (note that $v^\ast(k)$ is feasible at sampling instant $k+1$; see Remark \ref{remark:Slater}), but this could degrade system performance. Another approach to solve \eqref{eq:CG2} is to use the primal barrier interior-point methods. The main weakness of these methods is that they require a high number of Newton steps when high accuracy is required \cite[pp. 569]{Boyd2004}. Also, in general, there is no guarantee that early termination yields a feasible point \cite{Wang2010}.

In this section, we develop ROTEC to address the practical challenges discussed above. We begin by tightening the constraints of the conventional CG given in \eqref{eq:CG2}. Then, we build a continuous-time dynamical system that tracks the optimal solution of the CG problem, characterize its convergence properties, and define the ROTEC algorithm.

\subsection{Constraint Tightening}\label{sec:ConstraintTightening}
Consider the following optimization problem:
\begin{align}\label{eq:CG3}
v^\dag(k)=\left\{
\begin{array}{ll}
     &  \arg\min\limits_{v}\frac{1}{2}\left\Vert v-r(k)\right\Vert_Q^2\\
    \text{s.t.} & f_{is}(z(k),v)\leq0,~i=1,\cdots,m,~s=0,\cdots,s^\ast,\infty
\end{array}
\right.
\end{align}
where
\begin{align}\label{eq:f_is}
\left\{
\begin{array}{l}
f_{is}(z(k),v):=\hat{y}_i(s|z(k),v)-\bar{y}_i+1/\beta,~s\in\{0,\cdots,s^\ast\}\\
f_{i\infty}(z(k),v):=\hat{y}_i(\infty|z(k),v)-(1-\epsilon)\bar{y}_i+1/\beta
\end{array}
\right.,
\end{align}
with sufficiently large $\beta>0$ to make sure that the feasible set of \eqref{eq:CG3} is nonempty. See \figurename~\ref{fig:GeometricIllustration2} for a geometric illustration of $v^\ast(k)$ and $v^\dag(k)$. Note that the larger the $\beta$, the closer the optimization problem \eqref{eq:CG3} to \eqref{eq:CG2}, that is $\lim_{\beta\rightarrow\infty}v^\dag(k)=v^\ast(k)$.

The main advantage of the constraint tightening in \eqref{eq:f_is} is that it allows us define a continuous-time dynamical system whose trajectory remains feasible at all times and tracks the optimal solution (as will be proven in Theorems \ref{theorem:convergence} and \ref{theorem:convergence1}).

\subsection{Continuous-Time Dynamical System}
The \textit{modified} barrier function \cite{Polyak1992} associated with the optimization problem \eqref{eq:CG3} is\footnote{Note that despite \cite{Polyak1992}, we do not consider the multiplier $\frac{1}{\beta}$ in the penalty terms.}
\begin{align}\label{eq:BarrierFunction}
\mathcal{B}(z(k),v,\lambda)=&\frac{1}{2}\left\Vert v-r(k)\right\Vert_Q^2-\sum\limits_{i=1}^m\lambda_{i\infty}\log(-\beta f_{i\infty}(z(k),v)+1)\nonumber\\
&-\sum\limits_{i=1}^m\sum\limits_{s=0}^{s^\ast}\lambda_{is}\log(-\beta f_{is}(z(k),v)+1),
\end{align}
which can be seen as the Lagrangian for the following problem\footnote{In the rest of the paper, $\forall i$ means given any element of the set $\{1,\cdots,m\}$ and $\forall s$ means given any element of the set $\{0,\cdots,s^\ast,\infty\}$.}
\begin{align}\label{eq:CG4}
v^\dag(k)=\left\{
\begin{array}{ll}
     &  \arg\min\limits_{v}\frac{1}{2}\left\Vert v-r(k)\right\Vert_Q^2\\
    \text{s.t.} & \log\big(-\beta f_{is}(z(k),v)+1\big)\geq0,~\forall i,s
\end{array}
\right.,
\end{align}
with $\lambda=[\lambda_{10}~\cdots~\lambda_{m\infty}]^\top\in\mathbb{R}^{m(s^\ast+2)}$ ($\lambda_{is}\geq0,~\forall i,s$) as the Lagrange multiplier. At any sampling instant $k$, we denote the optimal dual parameter by $\lambda^\dag(k)$. 

\begin{figure}[!t]
\centering
\includegraphics[width=\columnwidth]{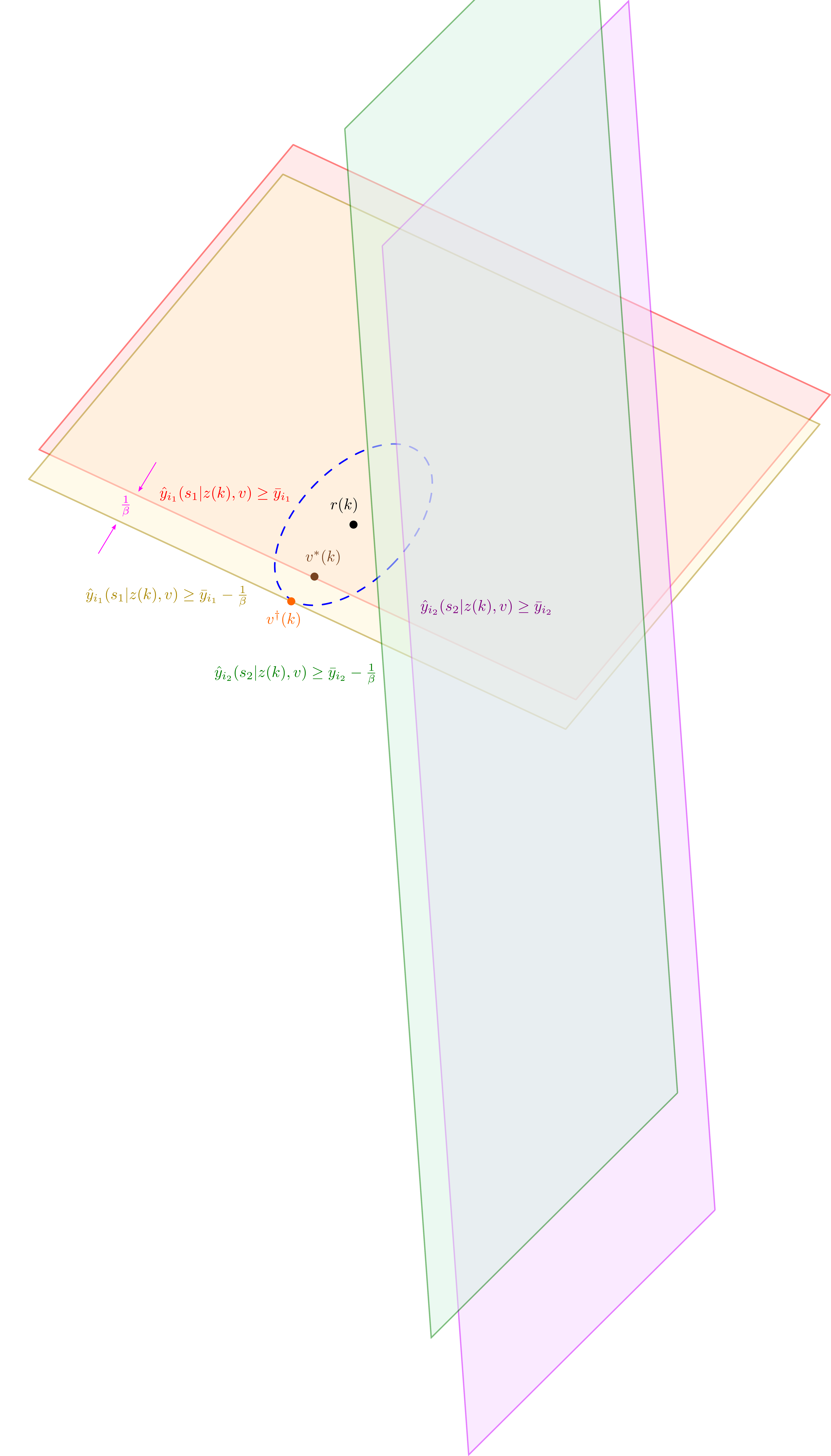}
\caption{A geometric illustration of the tightened constraints and the optimal solution $v^\dag(k)$, where $i_1,i_2\in\{1,\cdots,m\}$ and $s_1,s_2\in\{0,\cdots,s^\ast\}$.}
\label{fig:GeometricIllustration2}
\end{figure}

\begin{figure*}[!t]
\setcounter{equation}{16}
\begin{align}
\frac{d}{d\eta}\hat{v}(\eta)=&-\sigma\nabla_{\hat{v}}\mathcal{B}(k,\eta)=-\sigma\left(Q(\hat{v}(\eta)-r(k))+\beta\sum\limits_{i=1}^m\sum\limits_{s=0}^{s^\ast}\hat{\lambda}_{is}(\eta)\frac{\left(H_{is}\right)^\top}{-\beta f_{is}(k,\eta)+1}+\beta\sum\limits_{i=1}^m\hat{\lambda}_{i\infty}(\eta)\frac{\left(H_{is}\right)^\top}{-\beta f_{i\infty}(k,\eta)+1}\right),\label{eq:SystemUpdate1}\\
\frac{d}{d\eta}\hat{\lambda}_{is}(\eta)=&\sigma\big(\nabla_{\hat{\lambda}_{is}}\mathcal{B}(k,\eta)+\Psi_{is}(k,\eta)\big)=\sigma\Big(-\log\big(-\beta f_{is}(k,\eta)+1\big)+\Psi_{is}(k,\eta)\Big);\label{eq:SystemUpdate2}\\
\Psi_{is}(k,\eta)=&\left\{
\begin{array}{rl}
   0,  & \text{if (}\hat{\lambda}_{is}(\eta)>0\text{)} \text{ OR } \text{(}\hat{\lambda}_{is}(\eta)=0 \text{ and } \log\big(-\beta f_{is}(k,\eta)+1\big)<0 \text{)}\\
   \log\big(-\beta f_{is}(k,\eta)+1\big),  & \text{if }\hat{\lambda}_{is}(\eta)=0 \text{ and } \log\big(-\beta f_{is}(k,\eta)+1\big)>0
\end{array}
\right..\label{eq:Phi}
\end{align}
\hrule
\end{figure*}

\begin{remark}\label{remark:KKT}
According to the KKT condition, for active constraints we have $\log(-\beta f_{is}(z(k),v^\dag(k))+1)=0$ and $\lambda_{is}^\dag(k)\geq0$, and for inactive constraints we have $\log(-\beta f_{is}(z(k),v^\dag(k))+1)>0$ and $\lambda_{is}^\dag(k)=0$. We have $\log(-\beta f_{is}(z(k),v^\dag(k))+1)=0\Leftrightarrow f_{is}(z(k),v^\dag(k))=0$ and $\log(-\beta f_{is}(z(k),v^\dag(k))+1)>0\Leftrightarrow f_{is}(z(k),v^\dag(k))<0$, which means that active/inactive constraints of problems \eqref{eq:CG3} and \eqref{eq:CG4} are the same. 
\end{remark}

\begin{remark}
Since $\log(-\beta f_{is}(z(k),v)+1)\geq0$ if and only if $f_{is}(z(k),v)\leq0$, it implies \cite[pp. 131]{Boyd2004} that the optimal solutions of problems \eqref{eq:CG3} and \eqref{eq:CG4} are identical.  
\end{remark}

\begin{remark}\label{remark:saddle}
Let $\tilde{O}_{\infty,1/\beta}$ be as in \eqref{eq:Oinfty}, where constraints are tightened by $1/\beta$. Note that $\tilde{O}_{\infty,1/\beta}$ is positively invariant. Thus, if $(z(0),v(0))\in\tilde{O}_{\infty,1/\beta}$, at any sampling instant $k$, there exists $v^\dag(k)$ such that $\log(-\beta f_{is}(z(k),v^\dag(k))+1)\geq0,~\forall i,s$.
\end{remark}

At this stage, we propose the primal-dual gradient flow as shown in \eqref{eq:SystemUpdate1}-\eqref{eq:SystemUpdate2} which should be implemented at any sampling instant $k$, where $\sigma>0$ is a design parameter and $\eta$ is the auxiliary time variable\footnote{For the sake of brevity, we will denote $\mathcal{B}(z(k),\hat{v}(\eta),\hat{\lambda}(\eta))$ and $f_{is}(z(k),\hat{v}(\eta))$ by $\mathcal{B}(k,\eta)$ and $f_{is}(k,\eta)$, respectively, when the main focus is the time stamps $k$ and $\eta$.}. The function $\Psi_{is}(k,\eta)$ given in \eqref{eq:Phi} is the projection operator onto the normal cone of $\lambda$ \cite{Ryu2016}. The differential equations \eqref{eq:SystemUpdate1} and \eqref{eq:SystemUpdate2} build a virtual continuous-time system whose properties will be discussed next.

\begin{remark}
Given $\theta=[\hat{v}~\hat{\lambda}]$, it can be shown that the Hessian matrix $\nabla_{\theta\theta}\mathcal{B}(k,\eta)$ is not full rank. Thus, we cannot use methods requiring the inverse of $\nabla_{\theta\theta}\mathcal{B}(k,\eta)$, e.g., \cite{Fazlyab2016}. Other methods that use an approximation of $\left(\nabla_{\theta\theta}\mathcal{B}(k,\eta)\right)^{-1}$, like quasi-Newton
method, have a slower convergence \cite{Gilli2019}. 
\end{remark}

\subsection{Properties}
In this subsection, we prove convergence (Theorem \ref{theorem:convergence}) and constraint-handling (Theorem \ref{theorem:convergence1}) properties of system \eqref{eq:SystemUpdate1}-\eqref{eq:SystemUpdate2}. First, we show that $\left[\big(\nabla_{\hat{v}}\mathcal{B}(k,\eta)\big)^\top~-\big(\nabla_{\hat{\lambda}}\mathcal{B}(k,\eta)\big)^\top\right]^\top$ is strongly monotone, which will be used in the proof of Theorem \ref{theorem:convergence}.

\begin{lemma}\label{lemma:monotonicity}
The operator $\left[\big(\nabla_{\hat{v}}\mathcal{B}(k,\eta)\big)^\top~-\big(\nabla_{\hat{\lambda}}\mathcal{B}(k,\eta)\big)^\top\right]^\top$ is strongly monotone w.r.t. $(\hat{v},\hat{\lambda})$. That is $\exists\mu>0$ such that
\setcounter{equation}{19}
\begin{align}
\left[\begin{matrix}\nabla_{\hat{v}}\mathcal{B}(k,\eta)\\-\nabla_{\hat{\lambda}}\mathcal{B}(k,\eta)\end{matrix}\right]^\top\left[\begin{matrix}\hat{v}(\eta)-v^\dag(k)\\ \hat{\lambda}(\eta)-\lambda^\dag(k)\end{matrix}\right]\geq\mu\left\Vert\left[\begin{matrix}\hat{v}(\eta)-v^\dag(k)\\ \hat{\lambda}(\eta)-\lambda^\dag(k)\end{matrix}\right]\right\Vert^2.
\end{align}

\end{lemma}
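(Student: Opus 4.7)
The plan is to exploit the very specific structure of the modified barrier: $\mathcal{B}$ is quadratic in $\hat v$, linear in $\hat\lambda$, and couples the two through the strictly concave scalar function $\phi(t)=\log(-\beta t+1)$ composed with the affine maps $f_{is}(v)=H_{is}v+c_{is}$. First I would write out the gradients explicitly from \eqref{eq:BarrierFunction}, so that $\nabla_{\hat v}\mathcal{B}$ coincides with the expression appearing in \eqref{eq:SystemUpdate1} and $\nabla_{\hat\lambda_{is}}\mathcal{B}=-\log(-\beta f_{is}+1)$. I would then invoke the KKT stationarity at the saddle point, namely $Q(v^\dag-r)+\beta\sum_{i,s}\lambda_{is}^\dag (H_{is})^\top/(-\beta f_{is}^\dag+1)=\mathbf{0}$, which will let me eliminate the reference $r$ from the expansion and write everything in terms of the displacements $\hat v-v^\dag$ and $\hat\lambda-\lambda^\dag$.

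Next I would expand the inner product on the left-hand side and regroup the result into three pieces: (a) a pure primal quadratic form $(\hat v-v^\dag)^\top Q(\hat v-v^\dag)$ coming from the cost; (b) mixed terms of the shape $\beta\hat\lambda_{is}(f_{is}-f_{is}^\dag)/(-\beta f_{is}+1)-\beta\lambda_{is}^\dag(f_{is}-f_{is}^\dag)/(-\beta f_{is}^\dag+1)$, where I use the affineness $f_{is}(\hat v)-f_{is}(v^\dag)=H_{is}(\hat v-v^\dag)$ to cancel the $\hat v$-derivatives of the barrier against the analogous terms coming from the KKT substitution; (c) a pure dual contribution $\sum_{i,s}(\hat\lambda_{is}-\lambda_{is}^\dag)\log(-\beta f_{is}+1)$. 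Piece (a) is immediately bounded below by $\lambda_{\min}(Q)\,\|\hat v-v^\dag\|^{2}$ because $Q\succ0$.

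For pieces (b) and (c), my plan is to combine them and recognize the resulting scalar expression as the Bregman-type remainder associated with $\phi$: for each $(i,s)$ with $\lambda_{is}^\dag>0$ (active constraint, $f_{is}^\dag=0$, $\phi(f_{is}^\dag)=0$, $\phi'(f_{is}^\dag)=-\beta$) one obtains, after regrouping, a term of the form $\lambda_{is}^\dag\bigl[\phi(f_{is})-\phi(f_{is}^\dag)-\phi'(f_{is}^\dag)(f_{is}-f_{is}^\dag)\bigr]$ whose sign is controlled by strict concavity of $\phi$, while the $\hat\lambda_{is}$-dependent residual yields the term $(\hat\lambda_{is}-\lambda_{is}^\dag)\log(-\beta f_{is}+1)$. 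Using complementary slackness in Remark \ref{remark:KKT} (so that for inactive constraints $\lambda_{is}^\dag=0$ and the logarithm is strictly positive) together with the positive invariance of $\tilde O_{\infty,1/\beta}$ established in Remark \ref{remark:saddle}, the iterates $(\hat v(\eta),\hat\lambda(\eta))$ remain inside a compact set on which $-\phi''$ is bounded below by a positive constant. A second-order Taylor expansion of $\phi$ on this compact set then yields a quadratic lower bound proportional to $\|\hat\lambda-\lambda^\dag\|^{2}$ after appealing to strict complementarity/LICQ at the saddle point.

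The main obstacle will be extracting a $\mu\,\|\hat\lambda-\lambda^\dag\|^{2}$ lower bound from (c): because $\mathcal{B}$ is linear in $\hat\lambda$, there is no globally strongly concave dual term to exploit. The whole delicate part of the argument is therefore the compactness/strict-complementarity reduction that converts the strictly concave but not strongly concave $\phi$ into a uniform quadratic modulus. Once that is in place, summing the primal estimate $\lambda_{\min}(Q)\,\|\hat v-v^\dag\|^{2}$ with the dual estimate gives the claimed inequality with $\mu=\min(\lambda_{\min}(Q),\mu_\lambda)>0$.
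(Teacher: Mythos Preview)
Your strategy is quite different from the paper's. The paper's proof is a three-line Jacobian argument: it writes the Jacobian $\mathbf{J}$ of the operator, observes that $\nabla_{\hat\lambda\hat\lambda}\mathcal{B}=\mathbf{0}$ and that the mixed second derivatives cancel in the symmetrization, and then asserts $\mathbf{J}+\mathbf{J}^\top>0$ from positive definiteness of $\nabla_{\hat v\hat v}\mathcal{B}$ (equation \eqref{eq:Hessian}). There is no decomposition into primal/mixed/dual pieces, no Bregman remainder, and no appeal to strict complementarity or compactness.

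That said, the obstacle you isolate in your last two paragraphs\textemdash producing a term $\mu_\lambda\|\hat\lambda-\lambda^\dag\|^2$ out of pieces (b) and (c)\textemdash is not merely the ``delicate part''; it is a genuine gap that strict complementarity, LICQ, and compactness cannot close. Take $\hat v=v^\dag$ and perturb a single \emph{active} multiplier, $\hat\lambda_{i_0s_0}=\lambda^\dag_{i_0s_0}+\delta$, keeping all other components of $\hat\lambda$ equal to $\lambda^\dag$. The primal contribution vanishes because $\hat v-v^\dag=0$; and since activity means $f_{i_0s_0}(z(k),v^\dag)=0$ (Remark~\ref{remark:KKT}), we have $\log\bigl(-\beta f_{i_0s_0}(z(k),v^\dag)+1\bigr)=\log 1=0$, so the dual contribution vanishes as well. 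The left-hand side of the lemma's inequality is exactly zero while the right-hand side equals $\mu\delta^2>0$, so no $\mu>0$ can work. Your proposed second-order Taylor expansion of $\phi$ cannot rescue this: $\mathcal{B}$ is \emph{linear} in $\hat\lambda$, and at $v^\dag$ the coefficient of the perturbed active multiplier is identically zero.

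This same counterexample explains why the paper's short argument also overreaches: with a zero $(2,2)$ block, the symmetrized Jacobian $\mathbf{J}+\mathbf{J}^\top=\mathrm{diag}\bigl(2\nabla_{\hat v\hat v}\mathcal{B},\,\mathbf{0}\bigr)$ is only positive \emph{semi}-definite, which yields monotonicity of the operator but not strong monotonicity in the joint variable $(\hat v,\hat\lambda)$.
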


\begin{proof}
The Jacobian of the operator is
\begin{align}
\textbf{J}=\left[\begin{matrix}\nabla_{\hat{v}\hat{v}}\mathcal{B}(k,\eta) & \nabla_{\hat{v}\hat{\lambda}}\mathcal{B}(k,\eta)\\ -\nabla_{\hat{\lambda}\hat{v}}\mathcal{B}(k,\eta) & -\nabla_{\hat{\lambda}\hat{\lambda}}\mathcal{B}(k,\eta)
\end{matrix}\right],
\end{align}
where $\nabla_{\hat{\lambda}\hat{\lambda}}\mathcal{B}(k,\eta)=\textbf{0}$, $\nabla_{\hat{v}\hat{\lambda}}\mathcal{B}(k,\eta)=\nabla_{\hat{\lambda}\hat{v}}\mathcal{B}(k,\eta)$, and $\nabla_{\hat{v}\hat{v}}\mathcal{B}(k,\eta)$ is 
\begin{align}\label{eq:Hessian}
\nabla_{\hat{v}\hat{v}}\mathcal{B}(k,\eta)=&Q+\beta^2\sum\limits_{i=1}^m\sum\limits_{s=0}^{s^\ast}\hat{\lambda}_{is}(\eta)\frac{ \left(H_{is}\right)^\top H_{is}}{(-\beta f_{is}(k,\eta)+1)^2}\nonumber\\
&+\beta^2\sum\limits_{i=1}^m\hat{\lambda}_{i\infty}(\eta)\frac{ \left(H_{is}\right)^\top H_{is}}{(-\beta f_{i\infty}(k,\eta)+1)^2},
\end{align}
which is positive definite as $\hat{\lambda}_{is}(\eta)\geq0,~\forall i,s,\eta$ and $Q>0$. Thus, $\textbf{J}+\textbf{J}^\top>0$, which implies \cite{Ryu2016} that the operator is strongly monotone. This completes the proof. 
\end{proof}

The following theorem shows that the trajectory of system \eqref{eq:SystemUpdate1}-\eqref{eq:SystemUpdate2} converges to the optimal solution $(v^\dag(k),\lambda^\dag(k))$.

\begin{theorem}\label{theorem:convergence}
Let $v^\dag(k)$ be as in \eqref{eq:CG4} and $(\hat{v}(0),\hat{\lambda}(0))$ be the feasible initial condition
for system \eqref{eq:SystemUpdate1}-\eqref{eq:SystemUpdate2}. Then, $(\hat{v}(\eta),\hat{\lambda}(\eta))$ exponentially converges to $(v^\dag(k),\lambda^\dag(k))$ as $\eta\rightarrow\infty$. 
\end{theorem}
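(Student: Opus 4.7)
The plan is to use a quadratic Lyapunov function and exploit the strong monotonicity from Lemma~\ref{lemma:monotonicity} to obtain an exponential decay inequality. Specifically, I would take
$$V(\eta) = \tfrac{1}{2}\|\hat{v}(\eta)-v^\dag(k)\|^2 + \tfrac{1}{2}\|\hat{\lambda}(\eta)-\lambda^\dag(k)\|^2,$$
which is positive-definite in the error and vanishes exactly at $(v^\dag(k),\lambda^\dag(k))$. Differentiating along the flow \eqref{eq:SystemUpdate1}--\eqref{eq:SystemUpdate2} gives
$$\dot V(\eta) = -\sigma\!\begin{bmatrix}\nabla_{\hat v}\mathcal{B}(k,\eta)\\ -\nabla_{\hat\lambda}\mathcal{B}(k,\eta)\end{bmatrix}^{\!\top}\!\!\begin{bmatrix}\hat v(\eta)-v^\dag(k)\\ \hat\lambda(\eta)-\lambda^\dag(k)\end{bmatrix} + \sigma\bigl(\hat\lambda(\eta)-\lambda^\dag(k)\bigr)^{\!\top}\!\Psi(k,\eta),$$
and Lemma~\ref{lemma:monotonicity} bounds the first term above by $-\sigma\mu\,\|[\hat v-v^\dag;\,\hat\lambda-\lambda^\dag]\|^2 = -2\sigma\mu\,V(\eta)$.

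The key remaining step is to show that the projection correction contributes non-positively. Using the case analysis in \eqref{eq:Phi}: whenever $\hat{\lambda}_{is}(\eta)>0$, or $\hat{\lambda}_{is}(\eta)=0$ with $\log(-\beta f_{is}+1)<0$, the factor $\Psi_{is}$ is zero and that summand vanishes. In the only remaining case $\hat{\lambda}_{is}(\eta)=0$ while $\log(-\beta f_{is}+1)>0$, so $\Psi_{is}>0$; since $\lambda^\dag_{is}(k)\geq 0$ by dual feasibility, the multiplier $\hat\lambda_{is}-\lambda^\dag_{is} = -\lambda^\dag_{is}(k)\leq 0$, and the summand is $\leq 0$. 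Hence $\sigma(\hat\lambda-\lambda^\dag)^\top\Psi\leq 0$, and combining gives $\dot V \leq -2\sigma\mu V$; Gr\"onwall's inequality then yields $V(\eta)\leq V(0)e^{-2\sigma\mu\eta}$, i.e., exponential convergence of $(\hat v(\eta),\hat\lambda(\eta))$ to $(v^\dag(k),\lambda^\dag(k))$ at rate $\sigma\mu$.

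Before concluding, I would close one loose end, namely that the trajectory never leaves the domain $\{-\beta f_{is}(k,\eta)+1>0 \text{ for all } i,s\}$ on which $\mathcal{B}$ and the right-hand side of \eqref{eq:SystemUpdate1}--\eqref{eq:SystemUpdate2} are defined. The barrier structure takes care of this: the factors $(-\beta f_{is}+1)^{-1}$ in $\nabla_{\hat v}\mathcal{B}$ blow up as any constraint is approached, which, together with the assumed feasibility of $(\hat v(0),\hat\lambda(0))$ (cf.\ Remark~\ref{remark:saddle}) and the boundedness of $V$, rules out finite-time escape. The main obstacle I expect is not any of the conceptual pieces above but handling the non-smoothness of $\Psi$ at $\hat\lambda_{is}=0$ rigorously; specifically, the dual ODE is only piecewise smooth, so the Lyapunov calculation should be justified via a Carath\'eodory/differential-inclusion argument, or equivalently by showing that the projected dynamics coincide with a one-sided Dini derivative of $V$ that still satisfies $D^+V\leq -2\sigma\mu V$.
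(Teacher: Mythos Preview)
Your proposal is correct and follows essentially the same approach as the paper: the paper uses the Lyapunov function $V=\frac{1}{2\sigma}\|\hat v-v^\dag\|^2+\frac{1}{2\sigma}\|\hat\lambda-\lambda^\dag\|^2$ (the extra $1/\sigma$ is cosmetic), shows via the same case analysis on $\Psi_{is}$ that the projection term is non-positive, applies Lemma~\ref{lemma:monotonicity} to obtain $\dot V\le -2\sigma\mu V$, and concludes by Gr\"onwall. Your additional remarks on domain invariance and the non-smoothness of $\Psi$ go beyond what the paper argues (feasibility is handled separately in Theorem~\ref{theorem:convergence1}, and the paper does not treat the Carath\'eodory issue explicitly), but the core argument is the same.
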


\begin{proof}
Consider the following Lyapunov function:
\setcounter{equation}{22}
\begin{align}\label{eq:Lyapunov}
V(\hat{v}(\eta),\hat{\lambda}(\eta))=&\frac{1}{2\sigma}\left\Vert \hat{v}(\eta)-v^\dag(k)\right\Vert^2+\frac{1}{2\sigma}\left\Vert \hat{\lambda}(\eta)-\lambda^\dag(k)\right\Vert^2,
\end{align}
whose time derivative w.r.t. the auxiliary time variable $\eta$ is
\begin{align}\label{eq:Vdot1}
\frac{d}{d\eta}V(\hat{v}(\eta),\hat{\lambda}(\eta))=&\frac{1}{\sigma}\left(\hat{v}(\eta)-v^\dag(k)\right)^\top\frac{d}{d\eta}\hat{v}(\eta)\nonumber\\
&+\frac{1}{\sigma}\left(\hat{\lambda}(\eta)-\lambda^\dag(k)\right)^\top\frac{d}{d\eta}\hat{\lambda}(\eta),
\end{align}
where $\hat{\lambda}(\eta)=[\hat{\lambda}_{10}(\eta)~\cdots~\hat{\lambda}_{m\infty}(\eta)]^\top\in\mathbb{R}^{m(s^\ast+2)}$. According to \eqref{eq:SystemUpdate1} and \eqref{eq:SystemUpdate2}, it follows from \eqref{eq:Vdot1} that
\begin{align}\label{eq:Vdot2}
\frac{d}{d\eta}V(\cdot)=&-\left(\hat{v}(\eta)-v^\dag(k)\right)^\top\nabla_{\hat{v}}\mathcal{B}(\cdot)\nonumber\\
&+\big(\hat{\lambda}(\eta)-\lambda^\dag(k)\big)^\top\big(\nabla_{\hat{\lambda}}\mathcal{B}(\cdot)+\Psi(\cdot)\big),
\end{align}
where $\Psi(k,\eta)=[\Psi_{10}(k,\eta)~\cdots~\Psi_{m\infty}(k,\eta)]^\top\in\mathbb{R}^{m(s^\ast+2)}$.

According to \eqref{eq:Phi}, $\Psi_{is}(\cdot)$ is $-\nabla_{\hat{\lambda}_{is}}\mathcal{B}(k,\eta)$ when $\hat{\lambda}_{is}(\eta)=0$ and $\nabla_{\hat{\lambda}_{is}}\mathcal{B}(k,\eta)<0$, and is zero otherwise. Thus, since $\lambda_{is}^\dag(k)\geq0$ for any $k$, it implies that $(\hat{\lambda}_{is}(\eta)-\lambda_{is}^\dag(k))\cdot(\nabla_{\hat{\lambda}_{is}}\mathcal{B}(k,\eta)+\Psi_{is}(k,\eta))\leq(\hat{\lambda}_{is}(\eta)-\lambda_{is}^\dag(k))\nabla_{\hat{\lambda}_{is}}\mathcal{B}(k,\eta)$. Thus, it follows from \eqref{eq:Vdot2}:
\begin{align}\label{eq:Vdot3}
\frac{d}{d\eta}V(\hat{v}(\eta),\hat{\lambda}(\eta))\leq&-\left(\hat{v}(\eta)-v^\dag(k)\right)^\top\nabla_{\hat{v}}\mathcal{B}(k,\eta)\nonumber\\
&+(\hat{\lambda}(\eta)-\lambda^\dag(k))^\top\nabla_{\hat{\lambda}}\mathcal{B}(k,\eta),
\end{align}
which together with Lemma \ref{lemma:monotonicity} implies that
\begin{align}\label{eq:Vdot4}
\frac{d}{d\eta}V(\cdot)\leq&-\mu\left\Vert\left[\begin{matrix}\hat{v}(\eta)-v^\dag(k)\\ \hat{\lambda}(\eta)-\lambda^\dag(k)\end{matrix}\right]\right\Vert^2=-2\sigma\mu V(\cdot). 
\end{align}

Therefore, 
\begin{align}
V\big(\hat{v}(\eta),\hat{\lambda}(\eta)\big)\leq V\big(\hat{v}(0),\hat{\lambda}(0)\big)\cdot e^{-2\sigma\mu\eta}, 
\end{align}
and consequently,
\begin{align}\label{eq:ExponentialConvergence}
\left\Vert\left[\begin{matrix}\hat{v}(\eta)-v^\dag(k)\\ \hat{\lambda}(\eta)-\lambda^\dag(k)\end{matrix}\right]\right\Vert^2\leq \left\Vert\left[\begin{matrix}\hat{v}(0)-v^\dag(k)\\ \hat{\lambda}(0)-\lambda^\dag(k)\end{matrix}\right]\right\Vert^2e^{-2\sigma\mu\eta},
\end{align}
which completes the proof. 
\end{proof}

Theorem \ref{theorem:convergence} showed that the trajectory of system \eqref{eq:SystemUpdate1}-\eqref{eq:SystemUpdate2} converges to the optimal solution $(v^\dag(k),\lambda^\dag(k))$.  However, the evolution of system \eqref{eq:SystemUpdate1}-\eqref{eq:SystemUpdate2} might be terminated before convergence due to limited computation power. Thus, the trajectory of system \eqref{eq:SystemUpdate1}-\eqref{eq:SystemUpdate2} must remain feasible at all times. Theorem \ref{theorem:convergence1} formally ensures this property for the virtual continuous-time system \eqref{eq:SystemUpdate1}-\eqref{eq:SystemUpdate2}. Before that, first, we pose Remark \ref{remark:dualparameter} which will be used in the proof of Theorem \ref{theorem:convergence1}.

\begin{remark}\label{remark:dualparameter}
According to \eqref{eq:SystemUpdate2}-\eqref{eq:Phi}, $\frac{d}{d\eta}\hat{\lambda}_{is}(\eta)>0$ if $0<f_{is}(k,\eta)<1/\beta$. Thus, when $f_{is}(k,\eta)$ is in close proximity of $1/\beta$: i) there exists $\underline{\lambda}>0$ such that $\hat{\lambda}_{is}(\eta)\geq\underline{\lambda}$; and ii) $\Psi_{is}(k,\eta)=0$.
\end{remark}

\begin{figure*}[!b]
\hrule
\setcounter{equation}{33}
\begin{align}
\left\Vert\nabla_{\hat{v}}\mathcal{B}(\cdot)\right\Vert^2=&\left(\sum\limits_{j=1}^\xi\frac{\beta\hat{\lambda}_{i_js_j}(\cdot)}{\phi_{i_js_j}(\cdot)}\left(H_{i_js_j}\right)^\top\right)^\top\left(\sum\limits_{j=1}^\xi\frac{\beta\hat{\lambda}_{i_js_j}(\cdot)}{\phi_{i_js_j}(\cdot)}\left(H_{i_js_j}\right)^\top\right)+O(1)=\frac{\beta^2}{\left(\underline{\phi}(\cdot)\right)^2}\left\Vert\sum\limits_{j=1}^\xi\frac{\underline{\phi}(\cdot)}{\phi_{i_js_j}(\cdot)}\hat{\lambda}_{i_js_j}(\cdot)\left(H_{i_js_j}\right)^\top\right\Vert^2+O(1).
\label{eq:limitbehavior3}
\end{align}
\setcounter{equation}{35}
\begin{align}\label{eq:limitB2}
\lim\limits_{\phi_{i_js_j}(k,\eta)\rightarrow0^+,~\forall j}\;\left(\frac{d}{d\eta}\mathcal{B}(k,\eta)\right)\leq&\lim\limits_{\underline{\phi}(k,\eta)\rightarrow0^+}\;-\sigma\left(\frac{\zeta\beta^2\underline{\lambda}^2}{\left(\underline{\phi}(k,\eta)\right)^2}-\xi\left(\log(\underline{\phi}(k,\eta))\right)^2\right)<0.
\end{align}
\end{figure*}

\begin{theorem}\label{theorem:convergence1}
Let $(\hat{v}(\eta),\hat{\lambda}(\eta))$ be the solution of \eqref{eq:SystemUpdate1}-\eqref{eq:SystemUpdate2}. Given a feasible $(\hat{v}(0),\hat{\lambda}(0))$, $\hat{v}(\eta)$ satisfies the constraints of the conventional CG problem at all $\eta$. 
\end{theorem}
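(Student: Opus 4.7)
The plan is to argue by contradiction, using the modified barrier function $\mathcal{B}(k,\eta)$ itself as a Lyapunov-like quantity that blows up at the infeasibility boundary but is kept bounded along the flow. Define $\phi_{is}(k,\eta):=-\beta f_{is}(k,\eta)+1$; then the conventional CG constraint $\hat{y}_i(s|z(k),\hat{v}(\eta))\leq\bar{y}_i$ (and the analogous terminal constraint with $1-\epsilon$) is equivalent to $\phi_{is}(k,\eta)>0$. The hypothesis that $(\hat{v}(0),\hat{\lambda}(0))$ is feasible yields $\phi_{is}(k,0)>0$ for every pair $(i,s)$, and standard ODE theory gives a maximal interval $[0,\eta^\ast)$ on which the flow \eqref{eq:SystemUpdate1}-\eqref{eq:SystemUpdate2} is well-defined and all $\phi_{is}(k,\eta)$ remain strictly positive. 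The goal is to rule out $\eta^\ast<\infty$.

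I would then suppose, for contradiction, that $\eta^\ast<\infty$ and that a subset of indices $\mathcal{A}=\{(i_j,s_j)\}_{j=1}^{\xi}$ satisfies $\phi_{i_js_j}(k,\eta)\to 0^+$ as $\eta\to\eta^\ast$, while the remaining $\phi_{is}(k,\eta)$ stay uniformly bounded away from zero on a neighbourhood of $\eta^\ast$. Remark \ref{remark:dualparameter} then supplies, for $\eta$ sufficiently close to $\eta^\ast$, a uniform lower bound $\underline{\lambda}>0$ on each $\hat{\lambda}_{i_js_j}(\eta)$ and forces $\Psi_{i_js_j}(k,\eta)=0$. Differentiating $\mathcal{B}$ along the flow and substituting \eqref{eq:SystemUpdate1}-\eqref{eq:SystemUpdate2} gives
\begin{equation*}
\tfrac{d}{d\eta}\mathcal{B}(k,\eta)=-\sigma\|\nabla_{\hat{v}}\mathcal{B}\|^2+\sigma\nabla_{\hat{\lambda}}\mathcal{B}^\top(\nabla_{\hat{\lambda}}\mathcal{B}+\Psi).
\end{equation*}
On $\mathcal{A}$ the dual terms collapse to $\sigma\|\nabla_{\hat{\lambda}}\mathcal{B}\|^2$, whose size is controlled by $(\log\underline{\phi}(k,\eta))^2$ with $\underline{\phi}(k,\eta):=\min_j\phi_{i_js_j}(k,\eta)$, whereas the primal term carries $1/\phi_{i_js_j}^2$ singularities whose leading contribution is of order $\beta^2\underline{\lambda}^2/\underline{\phi}(k,\eta)^2$. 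Since $1/\underline{\phi}^2$ grows faster than $(\log\underline{\phi})^2$ as $\underline{\phi}\to 0^+$, the primal singularity dominates the dual growth.

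The upshot is that $\tfrac{d}{d\eta}\mathcal{B}(k,\eta)<0$ on some interval $(\eta_0,\eta^\ast)$, so $\mathcal{B}(k,\eta)\leq\mathcal{B}(k,\eta_0)$ there. Yet the barrier terms $-\hat{\lambda}_{i_js_j}(\eta)\log\phi_{i_js_j}(k,\eta)$, with $\hat{\lambda}_{i_js_j}\geq\underline{\lambda}$ and $\phi_{i_js_j}\to 0^+$, force $\mathcal{B}(k,\eta)\to+\infty$, which is the desired contradiction. Therefore $\eta^\ast=\infty$, every $\phi_{is}(k,\eta)$ stays strictly positive, and $\hat{v}(\eta)$ satisfies the conventional CG constraints at every $\eta$.

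The main obstacle I expect lies in making the dominance of $\|\nabla_{\hat{v}}\mathcal{B}\|^2$ over $\|\nabla_{\hat{\lambda}}\mathcal{B}\|^2$ watertight when several active constraints collide simultaneously: one must rule out delicate cancellations in the vector sum $\sum_j(\underline{\phi}/\phi_{i_js_j})\hat{\lambda}_{i_js_j}H_{i_js_j}^\top$ that produces the $1/\underline{\phi}^2$ term. This either calls for a linear-independence-type condition on the active rows $H_{i_js_j}$, or a case analysis extracting at least one index $j$ at which the ratio $\underline{\phi}/\phi_{i_js_j}$ stays bounded below by a positive constant (for instance, the index achieving the minimum). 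Once this technical point is handled, the rest of the contradiction argument proceeds cleanly.
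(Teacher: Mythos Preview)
Your approach is essentially identical to the paper's: the same barrier-function contradiction, the same derivative decomposition $\tfrac{d}{d\eta}\mathcal{B}=-\sigma\|\nabla_{\hat v}\mathcal{B}\|^2+\sigma\|\nabla_{\hat\lambda}\mathcal{B}\|^2+\sigma(\nabla_{\hat\lambda}\mathcal{B})^\top\Psi$, the same use of Remark~\ref{remark:dualparameter}, and the same $1/\underline{\phi}^2$ versus $(\log\underline{\phi})^2$ comparison. The cancellation obstacle you correctly isolate is exactly what the paper closes via its Appendix Lemma together with Alexandrov's theorem: since the feasible set is a convex polyhedron with nonempty interior, the outward normals $H_{i_js_j}$ at a boundary point can never be positively linearly dependent, which (combined with your observation that $\underline{\phi}/\phi_{i_js_j}=1$ at the minimizing index) yields the uniform lower bound $\|\nabla_{\hat v}\mathcal{B}\|^2\geq\zeta\beta^2\underline{\lambda}^2/\underline{\phi}^2+O(1)$.
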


\begin{proof}
Let $\phi_{is}(k,\eta):=-\beta f_{is}(k,\eta)+1$. According to \eqref{eq:f_is}, the constraints of the conventional CG problem \eqref{eq:CG2} are satisfied if $\phi_{is}(k,\eta)>0,~\forall i,s$. Note that $\mathcal{B}(k,\eta)\rightarrow\infty$ only if $\phi_{i_js_j}(k,\eta)\rightarrow0^+,~j\in\{1,\cdots,\xi\}$, where $\xi\leq m$, and $i_j\in\{1,\cdots,m\}$ and $s_j\in\{0,\cdots,s^\ast,\infty\}$. Thus, the boundedness of $\mathcal{B}(k,\eta)$ from above is equivalent to the constraint satisfaction at all $\eta$. 

We prove the boundedness of $\mathcal{B}(k,\eta)$ by showing that
\setcounter{equation}{29}
\begin{align}\label{eq:limitB1}
\lim\limits_{\phi_{i_js_j}(k,\eta)\rightarrow0^+,~\forall j}\;\left(\frac{d}{d\eta}\mathcal{B}(k,\eta)\right)<0,
\end{align}
which asserts that $\mathcal{B}(k,\eta)$ must decrease along the system trajectories when these trajectories are near the boundary.

According to \eqref{eq:BarrierFunction} and \eqref{eq:SystemUpdate1}-\eqref{eq:SystemUpdate2}, the time derivative of the barrier function $\mathcal{B}(k,\eta)$ w.r.t. the auxiliary time variable $\eta$ is
\begin{align}\label{eq:Bdot}
\frac{d}{d\eta}\mathcal{B}(k,\eta)=&-\sigma\left\Vert\nabla_{\hat{v}}\mathcal{B}(k,\eta)\right\Vert^2+\sigma\left\Vert\nabla_{\hat{\lambda}}\mathcal{B}(k,\eta)\right\Vert^2\nonumber\\
&+\sigma\left(\nabla_{\hat{\lambda}}\mathcal{B}(k,\eta)\right)^\top\Psi(k,\eta).
\end{align}

The limiting behavior of $\left(\nabla_{\hat{\lambda}}\mathcal{B}(k,\eta)\right)^\top\Psi(k,\eta)$, $\left\Vert\nabla_{\hat{\lambda}}\mathcal{B}(k,\eta)\right\Vert^2$, and $\left\Vert\nabla_{\hat{v}}\mathcal{B}(k,\eta)\right\Vert^2$ as $\phi_{i_js_j}(k,\eta)\rightarrow0^+,~\forall j$ is characterized as in \eqref{eq:limitbehavior1}, \eqref{eq:limitbehavior2}, and \eqref{eq:limitbehavior3}, respectively\footnote{Given $f:\mathbb{R}^m\rightarrow\mathbb{R}$, $f(x)=O(1)$ means that $\exists M>0$ such that $|f(x)|<M$.}:
\begin{align}
\left(\nabla_{\hat{\lambda}}\mathcal{B}(k,\eta)\right)^\top\Psi(k,\eta)=&O(1),\label{eq:limitbehavior1}\\
\left\Vert\nabla_{\hat{\lambda}}\mathcal{B}(k,\eta)\right\Vert^2=&\sum\limits_{j=1}^\xi\left(\log(\phi_{i_js_j}(k,\eta))\right)^2+O(1)\nonumber\\
\leq&\xi\left(\log(\underline{\phi}(k,\eta))\right)^2+O(1),\label{eq:limitbehavior2}
\end{align}
where $\underline{\phi}(k,\eta):=\min_{j\in\{1,\cdots,\xi\}}\{\phi_{i_js_j}(k,\eta)\}$. It is clear that $\underline{\phi}(k,\eta)\rightarrow0^+$ as $\phi_{i_js_j}(k,\eta)\rightarrow0^+,~\forall j$. Note that \eqref{eq:limitbehavior1} is deduced according to Remark \ref{remark:dualparameter}.

Note that \eqref{equ:condition} needed for the Lemma presented in Appendix is guaranteed by the Alexandrov's theorem \cite[pp. 333]{Alexandrov2005}. Indeed, since the feasible set of the optimization problem \eqref{eq:CG3} is a convex polyhedron, if \eqref{equ:condition} does not hold, then the outward vectors normal to the faces associated with the active constraints at a boundary point are linearly dependent with positive coefficients; this is only possible if the polyhedron has empty interior, while we assume the interior to be nonempty (see Subsection \ref{sec:ConstraintTightening}).

Now, consider \eqref{eq:limitbehavior3}. Since $\hat{\lambda}_{i_js_j}(\eta)\geq\underline{\lambda}$ as $\phi_{i_js_j}(k,\eta)\rightarrow0^+,~\forall j$ (see Remark \ref{remark:dualparameter}) and $\underline{\phi}(k,\eta)/\phi_{i_js_j}(k,\eta)=1$ for at least some $j\in\{1,\cdots,\xi\}$, by applying the Lemma presented in Appendix we obtain that there exists $\zeta>0$ such that the limiting behavior of $\left\Vert\nabla_{\hat{v}}\mathcal{B}(k,\eta)\right\Vert^2$ as $\phi_{i_js_j}(k,\eta)\rightarrow0^+,~\forall j$ satisfies
\setcounter{equation}{34}
\begin{align}
\left\Vert\nabla_{\hat{v}}\mathcal{B}(k,\eta)\right\Vert^2\geq\frac{\zeta\beta^2\underline{\lambda}^2}{\left(\underline{\phi}(k,\eta)\right)^2}+O(1).
\end{align}

Therefore, by taking the limit from both sides of \eqref{eq:Bdot} as $\phi_{i_js_j}(k,\eta)\rightarrow0^+,~j\in\{1,\cdots,\xi\}$, we obtain\footnote{Since 
$\lim_{x\rightarrow0^+}b\left(\log(x)\right)^2/\left(a/x^2\right)=0$ for any $a,b>0$ (one can use the L'H\^{o}pital's rule to show that), the function $\frac{a}{x^2}$ grows faster than $b(\log(x))^2$ as $x\rightarrow0^+$. Thus, $\lim_{x\rightarrow0^+}a/x^2-b\left(\log(x)\right)^2>0$ for any $a,b>0$.} equation \eqref{eq:limitB2}, which affirms \eqref{eq:limitB1}. This completes the proof.  
\end{proof}


\begin{remark}\label{remark:implementation}
For discrete-time implementation of system \eqref{eq:SystemUpdate1}-\eqref{eq:SystemUpdate2}, one can use the difference quotient with a sufficiently small sampling period $\Delta\eta$. In this case, a practical approach to prevent constraint violation due to discretization is to further tighten the constraints of \eqref{eq:CG4} as $\log\big(-\beta f_{is}(z(k),v)+1\big)\geq\vartheta,~\forall i,s$, where $\vartheta>0$ is small. Our numerical experiments suggest that such a discrete-time implementation maintains the desired properties of our algorithm.
\end{remark}

\subsection{Acceptance/Rejection Mechanism}\label{sec:ARM}
Theorem \ref{theorem:convergence} showed that at any sampling instant $k$, $(\hat{v}(\eta),\hat{\lambda}(\eta))\rightarrow(v^\dag(k),\lambda^\dag(k))$ exponentially fast as $\eta\rightarrow\infty$. However, due to limited availability of the computational power \eqref{eq:SystemUpdate1}-\eqref{eq:SystemUpdate2} may not have sufficient time to converge to the optimal solution $(v^\dag(k),\lambda^\dag(k))$ at every sampling instant. Indeed, it is more likely that the evolution of system \eqref{eq:SystemUpdate1}-\eqref{eq:SystemUpdate2} terminates before convergence. Since, in general, the behavior of $\hat{v}(\eta)-v^\dag(k)$ is not monotonic, there is a need for a logic-based method to accept or reject $\hat{v}(\eta)$ once \eqref{eq:SystemUpdate1}-\eqref{eq:SystemUpdate2} is terminated.

In this paper, we adopt the acceptance/rejection mechanism presented in \cite{Emanuele2021}. This mechanism relies on the fact that $v(k-1)$ is a feasible and a sub-optimal solution\footnote{Due to limited computational power, the applied command signal at sampling instant $k$ is not necessarily the optimum. For this reason, we drop the $\dag$ when referring to the applied command signal at sampling instant $k$. We do the same when referring to dual parameter.} for \eqref{eq:CG4} at sampling instant $k$ (see Remark \ref{remark:Slater}). Given the termination time $\eta_t$, the acceptance/rejection mechanism accepts $\hat{v}(\eta_t)$ (i.e., sets $v(k)=\hat{v}(\eta_t)$) if $\hat{v}(\eta_t)$ satisfies the following condition
\setcounter{equation}{36}
\begin{align}\label{eq:ARmechanism}
\left\Vert \hat{v}(\eta_t)-r(k)\right\Vert_Q^2\leq&\left\Vert v(k-1)-r(k)\right\Vert_Q^2-\left\Vert \hat{v}(\eta_t)-v(k-1)\right\Vert_Q^2,
\end{align}
and rejects (i.e., sets $v(k)=v(k-1)$) otherwise. Note that, as shown in \cite{Emanuele2021}, the condition \eqref{eq:ARmechanism} holds for $v^\dag(k)$ and any feasible $v(k-1)$, meaning that the mechanism does not discard the optimal solution if system \eqref{eq:SystemUpdate1}-\eqref{eq:SystemUpdate2} converges.


\subsection{Warm-starting}\label{sec:WarmStarting}
Theorem \ref{theorem:convergence} showed that dynamics evolving according to \eqref{eq:SystemUpdate1}-\eqref{eq:SystemUpdate2} converge exponentially fast. The inequality given in  \eqref{eq:ExponentialConvergence} indicates that faster convergence occurs for larger $\sigma$ and/or $\mu$. If $\sigma$ is made large, high update rate will be necessary when implementing \eqref{eq:SystemUpdate1}-\eqref{eq:SystemUpdate2} in discrete-time. Also, $\mu$ is determined by problem characteristics and is not directly tunable. This underlines the importance of warm-starting to improve convergence.

Regarding $\hat{v}(0)$, note that the set $\tilde{O}_{\infty,1/\beta}$ is positively invariant (see Remark \ref{remark:saddle}). Thus, it is desirable to set $\hat{v}(0)$ to the previously applied command signal (i.e., $\hat{v}(0)=v(k-1)$ at sampling instant $k$). This selection is reasonable, as in most applications, from one sampling instant to the next the state of the system $z(k)$ and the reference signal $r(k)$ do not change substantially.

Regarding $\hat{\lambda}(0)$, any non-negative value, including $\lambda(k-1)$, is feasible. Though $\lambda(k-1)$ is often a good approximation for the optimum dual variables at sampling instant $k$, there is an opportunity for improving the initial guess, as shown below.

As mentioned in Remark \ref{remark:KKT}, $\lambda_{is}(k-1)\neq0$ means that the constraint on the $i$-th output at prediction time $s$ was active at sampling instant $k-1$. This condition moves one step backward at sampling instant $k$; that is the constraint on the $i$-th output at prediction time $s-1$ will be active. The same condition holds for inactive constraints, i.e., those associated with $\lambda_{is}(k-1)=0$. This implies that there is a one-step time shift in the active and inactive constraints. Based upon this insight, we propose the following initial condition for $\hat{\lambda}_i(0)=[\hat{\lambda}_{i0}(0)~\cdots~\hat{\lambda}_{is^\ast}(0)~\hat{\lambda}_{i\infty}(0)]^\top,~i\in\{1,\cdots,m\}$:
\begin{align}\label{eq:InitialConditionLambda}
\hat{\lambda}_{i}(0)=[\hat{\lambda}_{i1}(k-1)~\cdots~\hat{\lambda}_{is^\ast}(k-1)~\hat{\lambda}_{is^\ast}(k-1)~\hat{\lambda}_{i\infty}(k-1)]^\top,
\end{align}
where $\hat{\lambda}_{is^\ast}(k-1)\geq0$ is used as an initial guess for the value of dual parameter at the new prediction time. We have found that in our experiments such a warm-starting worked well.

\subsection{ROTEC}\label{sec:ROTEC}
The ROTEC algorithm is presented in Algorithm \ref{alg:algorithm}. This algorithm should be run at every sampling instant. This algorithm computes the control input $u(k)$ at every sampling time and provides the initial condition $(\hat{v}(0),\hat{\lambda}(0))$ for the next sampling time. Algorithm \ref{alg:algorithm} addresses system safety by ensuring constraint satisfaction at all times (see Theorem \ref{theorem:convergence1}), assuming that the discrete-time implementation accurately approximates the continuous-time updates. It also addresses system performance by optimizing the applied command signal (see Theorem \ref{theorem:convergence}). Finally, Algorithm \ref{alg:algorithm} addresses real-time schedulability, as it is robust to early termination; this allows us to choose the sampling period of tasks to satisfy the schedulability condition \eqref{eq:schedulabilitycondition} with no concern about its performance.

Note that once task $\tau_N$ is terminated, there will be no time to implement the mechanism \eqref{eq:ARmechanism} and compute corresponding control input. To address this issue, during the run-time of the virtual system (step 3), we continuously implement the mechanism (step 4) and update and store in memory the control input (step 5) without applying it to the system. This will guarantee that once task $\tau_N$ is terminated, a suitable control input is available without requiring any further computations.

\begin{algorithm}[!t]
\caption{ROTEC}\label{alg:algorithm}
\begin{algorithmic}[1]
\Require{State of system \eqref{eq:systemfinal} at sampling instant $k$ (i.e., $z(k)$), and the applied command signal and the obtained dual parameter at the previous sampling instant (i.e., $v(k-1)$ and $\lambda(k-1)$)}
\Ensure{The control input at sampling instant $k$ (i.e., $u(k)$)}
\Procedure{ROTEC}{$z(k),v(k-1),\lambda(k-1)$}
\While{Task $\tau_N$ is not terminated}
\State \multiline{Update system \eqref{eq:SystemUpdate1}-\eqref{eq:SystemUpdate2} with a given $\Delta\eta$, and initial condition $\hat{v}(0)=v(k-1)$ and $\hat{\lambda}(0)$ as in \eqref{eq:InitialConditionLambda}.}
\State \multiline{Implement the acceptance/rejection mechanism given in \eqref{eq:ARmechanism} at every virtual time step (i.e., $\alpha\Delta\eta,~\alpha=0,1,\cdots$).}
\State \multiline{Compute and store control input $u(k)$ via \eqref{eq:controllaw} at every virtual time step if the condition \eqref{eq:ARmechanism} is
satisfied.}
\EndWhile
\State \textbf{end while}
\State \textbf{return} command signal $v(k)$ and control input $u(k)$
\EndProcedure
\end{algorithmic}
\end{algorithm}

\section{Simulation Study\textemdash Vehicle Rollover Prevention}\label{sec:simulation}
Rollover is a safety issue for a vehicle \cite{Ataei2020,Shi2021}, in which it tips over onto its side or roof. In this section, we use a simplified model to represent the vehicle dynamics, and apply Algorithm \ref{alg:algorithm} to guard the vehicle against rollover. Note that vehicles are prototypical examples of systems with limited computing power processors where execute multiple parallel functions \cite{Ibrahim2020,Kim2021}.


\subsection{Setting}
We consider a scenario where the longitudinal speed is constant and two safety-critical control systems are implemented on a single processor. The sampling period of the first task is 100 [ms], and its execution time (expressed in ms) follows a Weibull distribution\footnote{Using the Weibull distribution to characterize the execution time of a task is well-accepted in real-time scheduling literature (e.g., \cite{Ophelders2008,Lu2012}).} with shape parameter 2, location parameter 20, and scale parameter 4. Thus, the worst-case execution time of the first task is 30 ms. The second task tracks a desired Steering Wheel Angle (SWA) which is generated by either a human driver or a higher-level controller. We employ ROTEC to manipulate the applied SWA (i.e., the command signal) to prevent rollover, while ensuring convergence to the desired SWA.

As shown in \cite{Bencatel2018}, the vehicle dynamics can be modelled as $\dot{x}=A_ox+B_o\cdot SWA(t)$, where $x(t)=[x_1(t)~x_2(t)~x_3(t)~x_4(t)]^\top$ with $x_1(t)$ as the roll angle, $x_2(t)$ as the roll rate, $x_3(t)$ as the lateral velocity, and $x_4(t)$ as the yaw rate. We consider the one-sample delay described in Subsection \ref{sec:TaskN}. Given that the vehicle has a constant speed of 50 [m/h]\footnote{This assumption is reasonable, as we can assume that the vehicle tracks a constant longitudinal speed through a separate control law.}, $A_o$ and $B_o$ are \cite{Liu2020}:
\begin{align*}
A_o=&\left[\begin{matrix}0.00499 & 0.997 & 0.0154 & -6.81\times10^{-5}\\ -78.3 & -12.2 & -65.3 & -3.89 \\ -0.932 & -0.799 & -6.20 & -1.57 \\ 1.52 & 3.32 & 8.27 & -1.49\end{matrix}\right],\\ B_o=&\left[\begin{matrix}-5.76\times10^{−5} & 2.80 & 0.278 & 0.655\end{matrix}\right]^\top.
\end{align*}

The rollover constraints are defined through the Load Transfer Ratio (LTR), and are imposed as $\left\vert LTR(t)\right\vert\leq1$, where $LTR(t)$ for the given longitudinal speed is $LTR(t)=0.12x_1(t)+0.0124x_2(t)-0.0108x_3(t)+0.0109x_4(t)$.

We use \texttt{YALMIP} toolbox \cite{Lofberg2004} to implement the computations of the conventional CG scheme. The worst-case execution time for the conventional CG from 2000 runs is $\sim$200 ms. Thus, the sampling period of the conventional CG should be $>285$ ms.

\subsection{System Performance\textemdash Comparison Study}
We consider the following three cases: Case I) There is no computational limitation (for instance, we implement the tasks on a more powerful processor), and we implement the conventional CG every 100 milliseconds; Case II) We implement the conventional CG, and to satisfy the real-time schedulability condition \eqref{eq:schedulabilitycondition}, we let the sampling period of the second task to be 300 [ms]; and Case III) We set the sampling period of the second task to 100 [ms], and implement ROTEC with $\sigma=100$, $\beta=10^5$, and $\Delta\eta=0.001$. For comparison purposes, we define the performance index as $\text{PI}\triangleq\int\left\Vert v(t)-r(t)\right\Vert^2dt$, where the integration is performed over the duration of the simulations.

Simulation results are shown in \figurename~\ref{fig:Simulation1}, where 2000 runs are presented for Case III. The normalized achieved PIs for all cases are reported in TABLE~\ref{tab:TableComparison}, where the achieved PI for Case I is used as the basis for normalization. As seen in this table, using a large sampling period (Case II) can degrade the performance. ROTEC (Case III) yields a better performance by computing a sub-optimal solution every 100 ms.

\begin{figure}[!t]
\centering
\includegraphics[width=4.25cm]{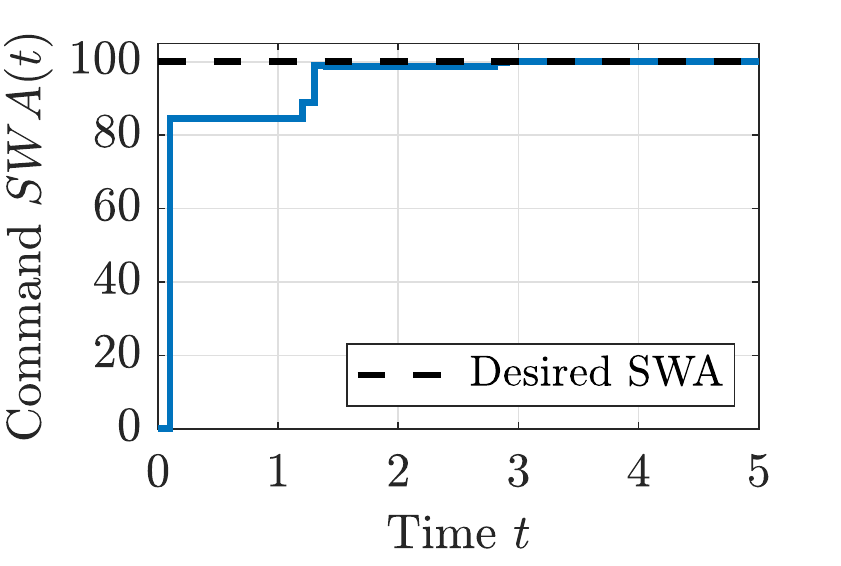}\includegraphics[width=4.25cm]{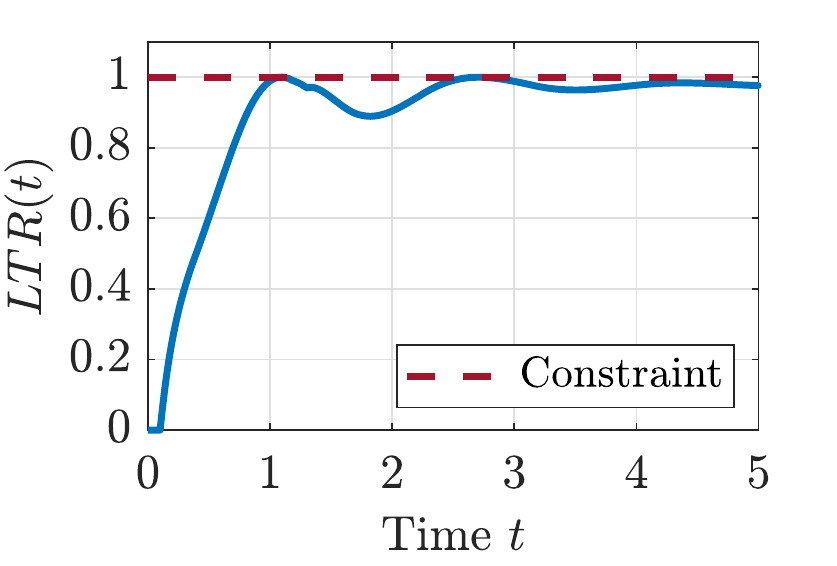}\\(a)\\
\includegraphics[width=4.25cm]{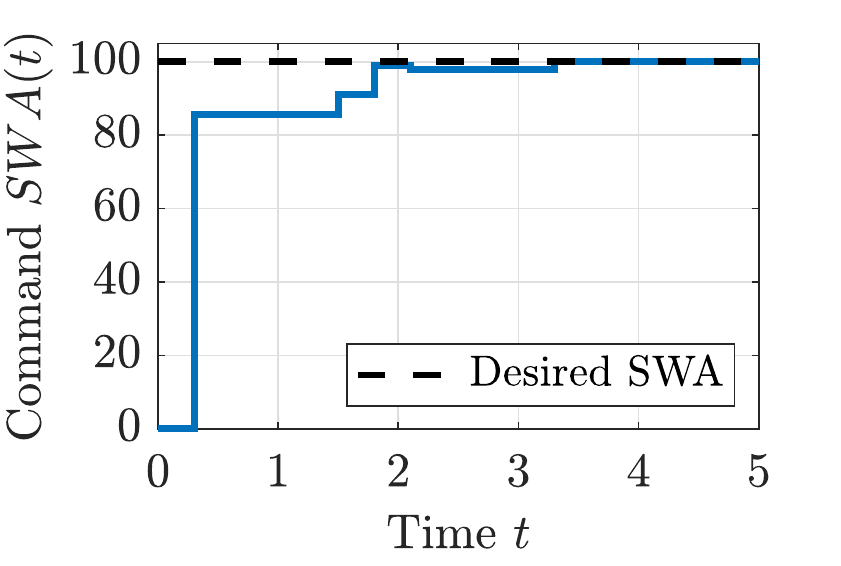}\includegraphics[width=4.25cm]{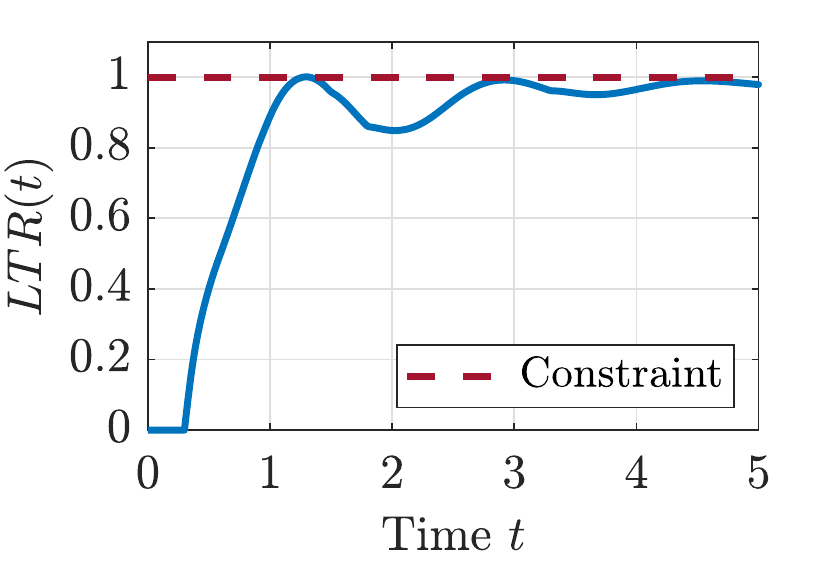}\\(b)\\
\includegraphics[width=4.25cm]{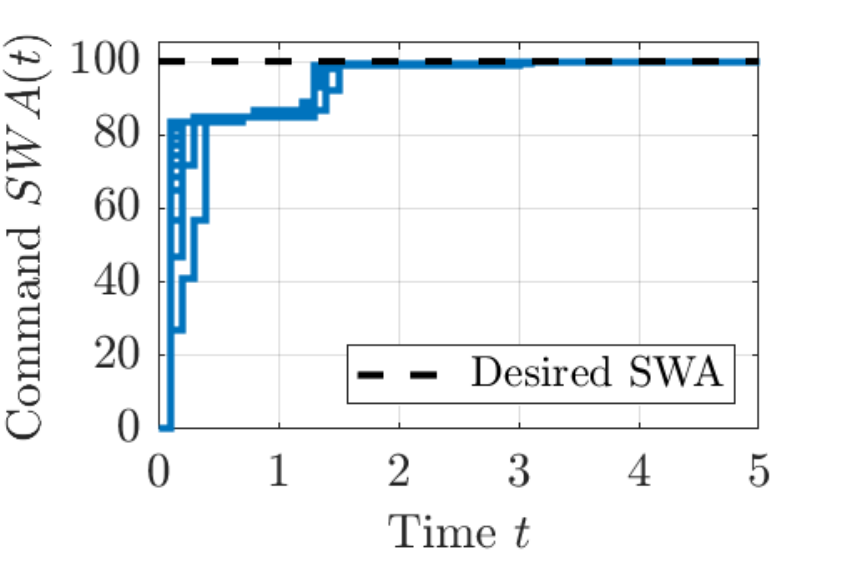}\includegraphics[width=4.25cm]{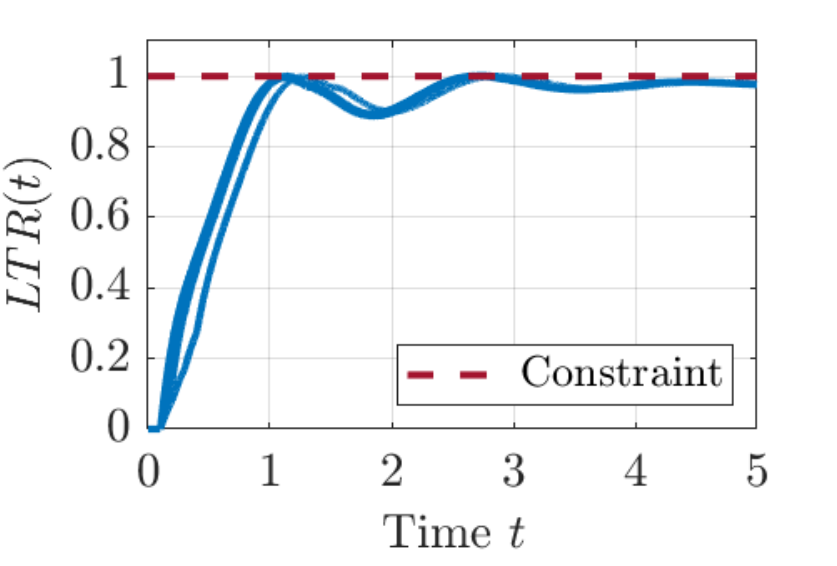}\\(c)
\caption{Figure (a): Results for Case I, in which the conventional CG is implemented every 100 ms. Figure (b): Results for Case II, in which the conventional CG is implemented every 300 ms. Figure (c): Results for Case III (2000 runs), in which ROTEC is implemented every 100 ms.}
\label{fig:Simulation1}
\end{figure}

\begin{table}[!t]
\centering
\caption{System Performance Analysis for Case I, II, and III}
\label{tab:TableComparison}
\begin{tabular}{c|c|c|c}
     & Case I & Case II & Case III \\
     \hline
Normalized PI & 1 & 1.82 & 1.34 (Mean)
\end{tabular}
\end{table}

\subsection{Rollover Prevention\textemdash Fishhook Test}
To investigate the constraint-handling property of ROTEC, we conduct the fishhook test \cite{Dahmani2016,Mehrtash2019}. This test is a steer/countersteer maneuver, in which to achieve the maximum severity, the desired SWA switches when the maximum roll angle is reached. This test is described in \figurename~\ref{fig:Fishhook}.

Simulation results are shown in \figurename~\ref{fig:Simulation2}. As seen in this figure, ROTEC ensures that the vehicle will not roll over when subject to a severe obstacle avoidance maneuver.

\subsection{Impact of A High Number of Early Terminations}
In this subsection, we study the impact of a high number of early terminations on the performance of ROTEC. To conduct this study, we discretize the vehicle dynamics with a sampling period of 100 msec, but we limit the execution time of ROTEC to 10 $\mu$sec. Such a limitation ensures that the CG task faces early termination at most of the sampling instants. A high number of early terminations prevents ROTEC from converging to the optimal solution at most of the sampling instants. As seen from \figurename~\ref{fig:Comparison1}, this degrades the tracking performance and slows down the convergence of the CG output to the reference command.  Nevertheless, the constraints are not violated and convergence of CG output to the reference command is still achieved.

\subsection{Sensitivity Analysis\textemdash Impact of $\sigma$}
We conducted sensitivity analysis of the performance of ROTEC with respect to the design parameter $\sigma$. \figurename~\ref{fig:Simulation3} shows how $\sigma$ impacts the performance. From \figurename~\ref{fig:Simulation3} we see that as $\sigma$ decreases, the performance of ROTEC degrades. This is consistent with expectations from \eqref{eq:ExponentialConvergence}. Note that a large $\sigma$ also reduces the number of discarded $\hat{v}(\eta)$ as a result of violation of \eqref{eq:ARmechanism}, such that the mean number of rejected command signals is $4$ and $0$ for $\sigma=50$ and $\sigma=150$, respectively.

\begin{figure}[!t]
\centering\includegraphics[width=\columnwidth]{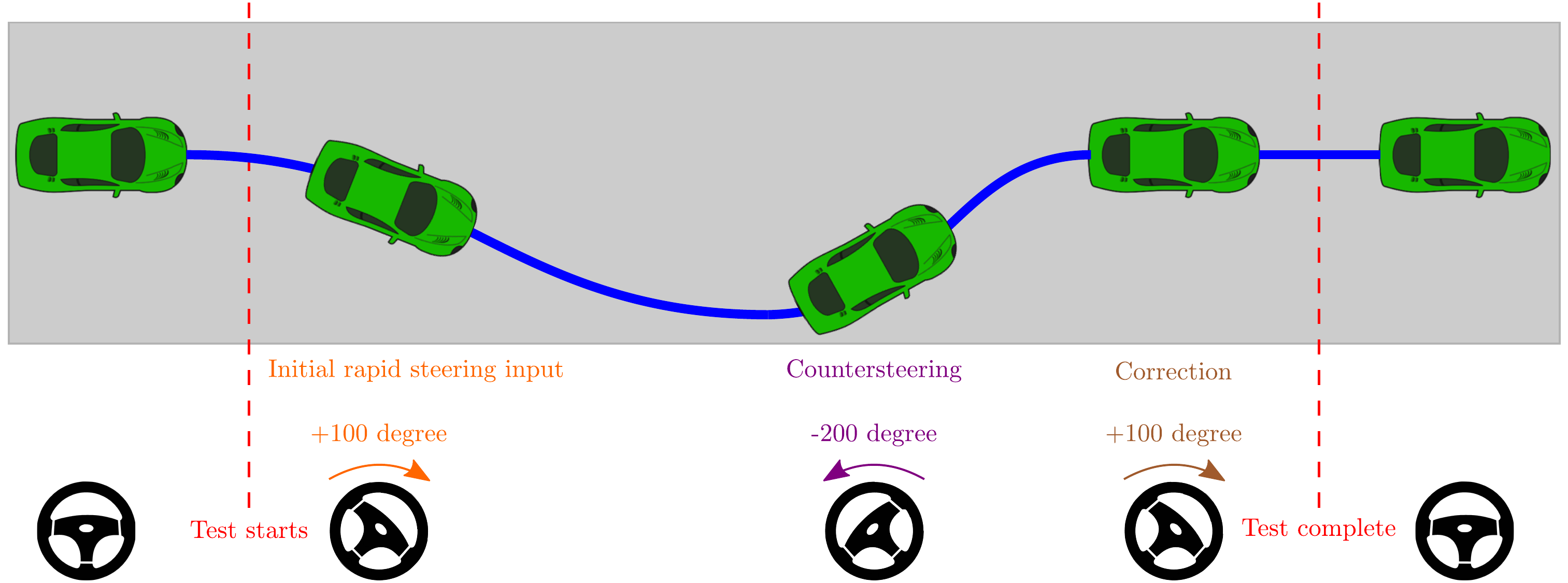}
\caption{The considered fishhook test for investigating rollover prevention.}
\label{fig:Fishhook}
\end{figure}

\begin{figure}[!t]
\centering\includegraphics[width=4.25cm]{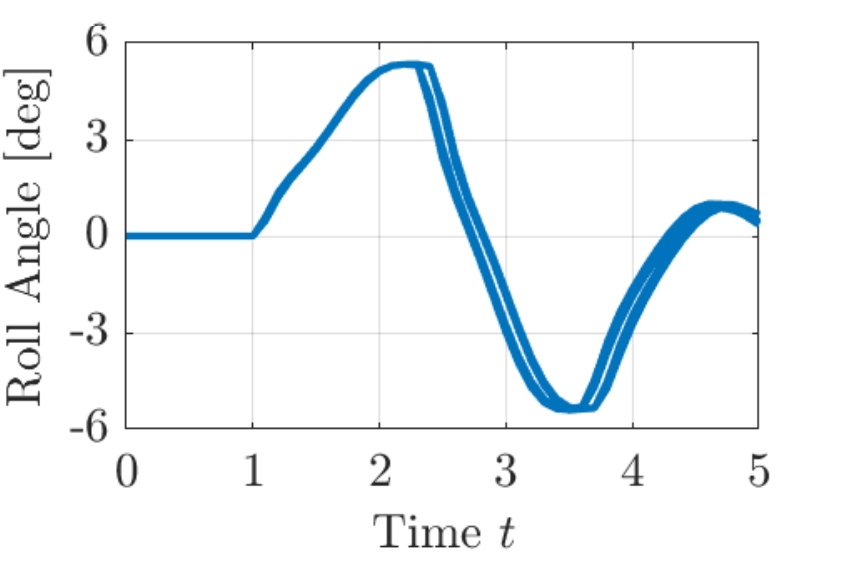}\includegraphics[width=4.25cm]{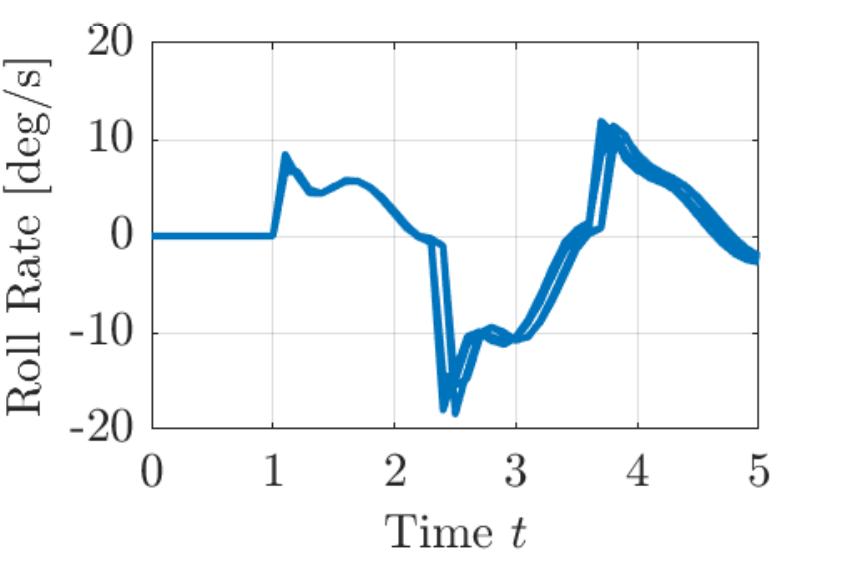}\\
\includegraphics[width=4.25cm]{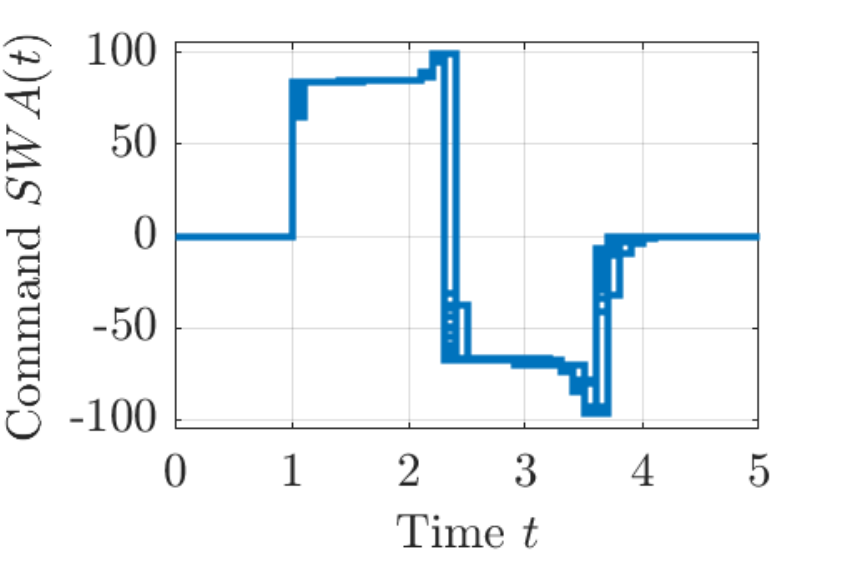}\includegraphics[width=4.25cm]{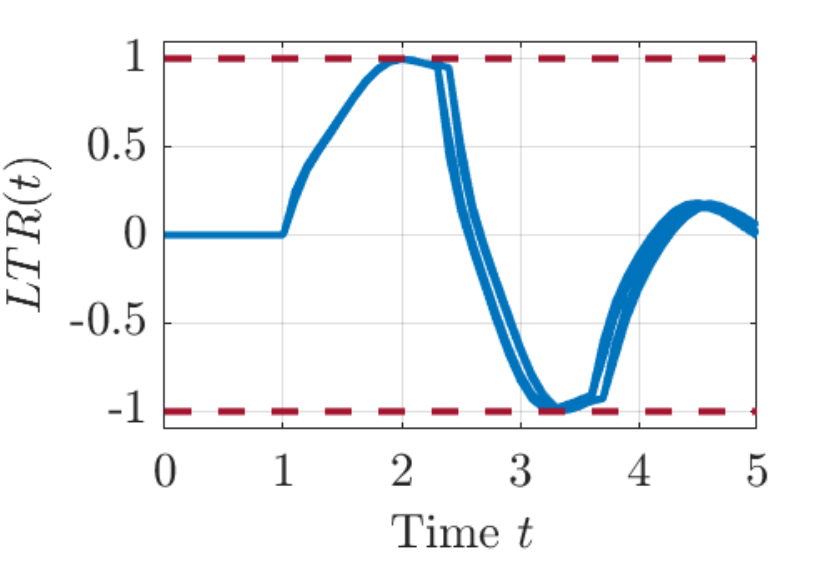}
\caption{Simulation results for the fishhook test (2000 runs). The desired SWA switches when the roll angle reaches its maximum (i.e., $x_2(t)=0$).}
\label{fig:Simulation2}
\end{figure}

\begin{figure}[!t]
\centering
\includegraphics[width=8cm]{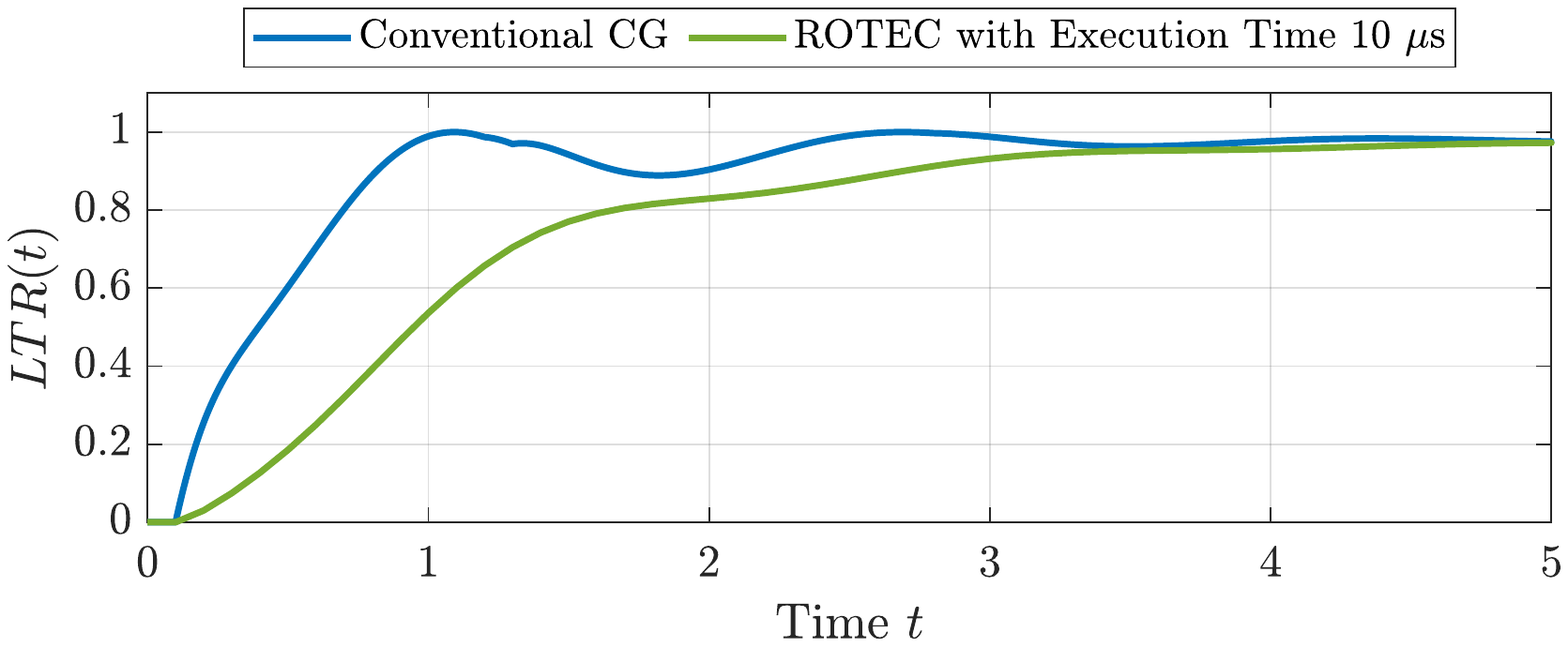}\\
\includegraphics[width=4.25cm]{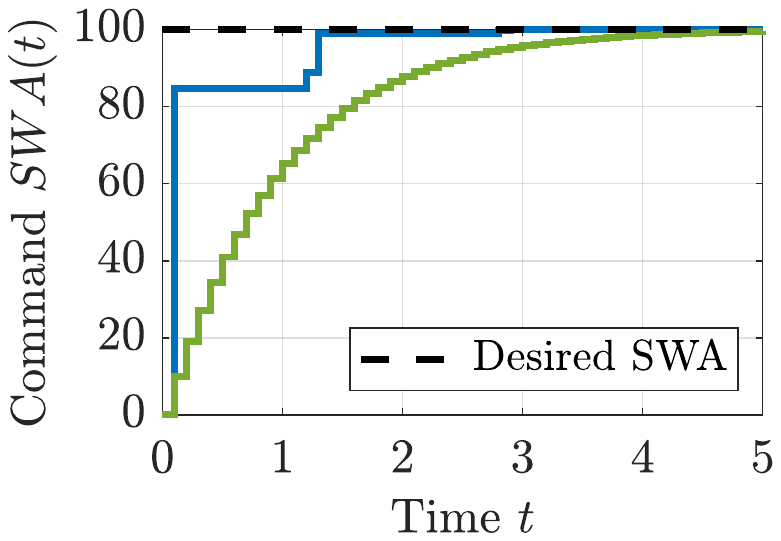}\hspace{0.2cm}\includegraphics[width=4.25cm]{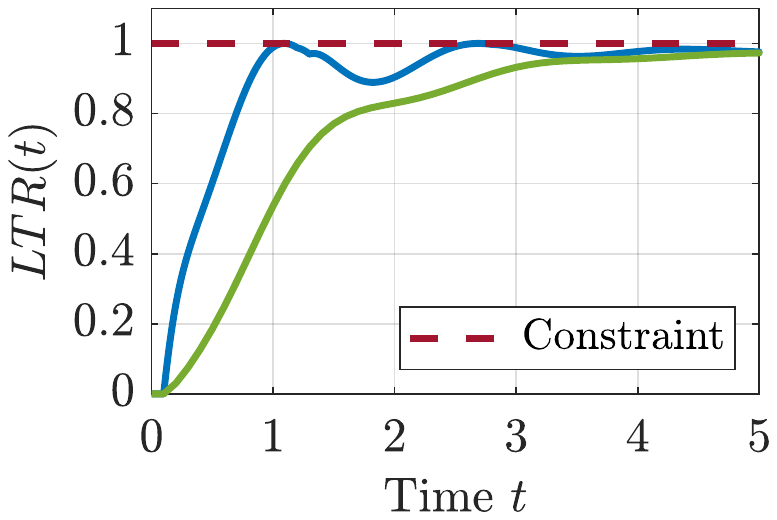}
\caption{The impact of a high number of early terminations on the tracking performance obtained by ROTEC.}
\label{fig:Comparison1}
\end{figure}

\begin{figure}[!t]
\centering
\includegraphics[width=8cm]{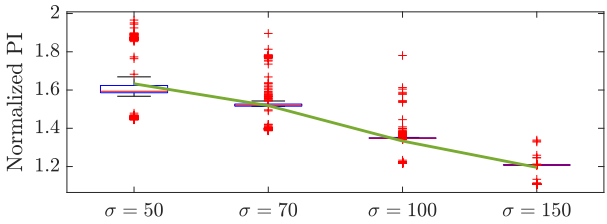}
\caption{The impact of $\sigma$ on the obtained PI from 2000 runs. The green line shows the mean values.}
\label{fig:Simulation3}
\end{figure}

\subsection{Sensitivity Analysis\textemdash Impact of Sampling Period}
In this subsection, we study the sensitivity of ROTEC to the sampling period. For this study, we assume that the vehicle dynamics are discretized with different sampling periods, and ROTEC is the only control system running on the processor. \figurename~\ref{fig:Simulation4} shows how sampling period impacts the performance of closed-loop system. As discussed in Section \ref{sec:PF}, the larger the sampling period is, the poorer performance of the CG scheme could become. This is illustrated in \figurename~\ref{fig:Simulation4}.

\begin{figure}[!t]
\centering
\includegraphics[width=8cm]{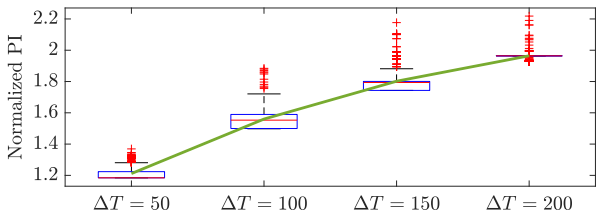}
\caption{The impact of sampling period (expressed in ms) on the obtained PI from 2000 runs. The green line shows the mean values.}
\label{fig:Simulation4}
\end{figure}

\section{Conclusion}\label{sec:conclusion}
This paper proposed ROTEC (RObust to early TErmination Command governor), an algorithm capable of maintaining feasibility of command governor by adapting to available computation times. Variability in time available to
perform computations is a common occurrence in modern Cyber-Physical Systems, where several tasks can run on the same processor. The core idea of ROTEC is to use a continuous-time primal-dual gradient flow algorithm that is run for as long as the processor is available for computation, and to augment such an algorithm with an acceptance/rejection logic. ROTEC guarantees constraint satisfaction at all times, and provides a sub-optimal but feasible and effective solution if early terminated due to limited computation time. The effectiveness of ROTEC is validated through simulation studies of vehicle rollover prevention. The paper shows that ROTEC addresses the availability of limited computating power, yields an acceptable performance, and guarantees rollover prevention under severe steer/countersteer maneuvers.

Future research will consider how other optimization-based controllers can be implemented in a way that is robust to early termination and variability in available processor power.

\bibliographystyle{IEEEtran}
\bibliography{ref}{}

\section*{Appendix}\label{appendix}
\begin{lemma-non} 
Let $M_i\in\mathbb{R}^{p}$, and $m_i\geq0$, $i=1,\cdots,q$ be given vectors and real numbers, respectively. Suppose that it is known that $m_{i^\ast}\geq\underline{m}>0$ for some $i^*\in\{1,\cdots,q\}$. Let 
$$\Omega_i=\left\{W\in\mathbb{R}^{p}\Bigg| W=\sum\limits_{\substack{j=1\\j\neq i}}^{q}m_jM_j~\text{for some~}m_j\geq0\right\},
$$ 
and assume that
\begin{equation}\label{equ:condition}\tag{Condition $\maltese$}
-M_{i}\not\in\Omega_{i},~i\in\{1,\cdots,q\},
\end{equation}

Let
$$
J=\left(\sum_{i=1}^{q}m_iM_i\right)^\top \left(\sum_{i=1}^{q}m_iM_i\right).
$$

Then, there exists
$\epsilon>0$  such that for all $m_i$ satisfying the above assumptions the value
of $J$ admits the following bound:
$$
J\geq\epsilon\underline{m}^2>0.
$$
\end{lemma-non}

\begin{proof}
Since $m_{i^\ast}\geq\underline{m}>0$, one can rewrite $J$ as
\begin{align}
J=&m_{i^\ast}^2\left(M_{i^\ast}+\sum\limits_{\substack{i=1\\i\neq i^\ast}}^{q}\frac{m_i}{m_{i^\ast}}M_i\right)^\top\left(M_{i^\ast}+\sum\limits_{\substack{i=1\\i\neq i^\ast}}^{q}\frac{m_i}{m_{i^\ast}}M_i\right)\nonumber\\
\geq&\underline{m}^2\left\Vert-M_{i^\ast}-\sum\limits_{\substack{i=1\\i\neq i^\ast}}^{q}\frac{m_i}{m_{i^\ast}}M_i \right\Vert^2.\nonumber
\end{align}

Since $-M_{i^\ast}\not\in\Omega_{i^\ast}$ and $\Omega_{i^\ast}$ is a closed set, there is a minimum distance $d_{i^\ast}>0$ between $-M_{i^\ast}$ and the set $\Omega_{i^\ast}$. Thus, letting $\epsilon=d_{i^\ast}^2$ completes the proof. 
\end{proof}

\end{document}